\let\csname equation*\endcsname\relax
\let\csname endequation*\endcsname\relax
\newtheorem{theorem}{Theorem}
\newtheorem{remark}{Remark}
\newtheorem{lemma}[theorem]{Lemma}
\newtheorem{proposition}{Proposition}
\newcommand{\R}{\mathbb{R}}
\newcommand{\C}{\mathbb{C}}
\renewcommand*{\d}[1]{\thinspace \dif#1}
\renewcommand*{\i}{\mathrm{i}}
\newcommand{\dif}{\mathrm{d}}
\newcommand{\x}{\mathbf{r}}
\newcommand{\y}{\mathbf{r'}}
\newcommand{\xz}{\x_{0}}
\newcommand{\zs}{{\boldsymbol{\xi}}}
\newcommand{\J}{\mathrm{\bf J}}
\newcommand{\Z}{\mathrm{\bf Z}}
\renewcommand{\e}{\mathrm{\bf e}}
\newcommand{\E}{\mathrm{\bf E}}
\newcommand{\curl}{\mathrm{\bf curl\,}}
\newcommand{\Om}{\mathbf{\Omega}}
\newcommand{\Oz}{D}
\newcommand{\n}{\textbf{n}}
\newcommand{\HVS}{V_{\xi}}
\newcommand\Green[2]{\mathcal{G}\!\left(#1,#2\right)}
\DeclareOldFontCommand{\bf}{\normalfont\bfseries}{\mathbf}
\begin{document}

\title{Near-Field Linear Sampling Method for Axisymmetric Eddy Current Tomography}

\author{H. HADDAR$^1$ and M. K. RIAHI$^{2,3}$\thanks{ Corresponding author:mohamed.riahi@ku.ac.ae}}
\address{$^1$ INRIA, Ecole Polytechnique (CMAP), Universit\'e Saclay Ile de France, Route de Saclay, 91128
Palaiseau, France
 \ead{haddar@cmap.polytchnique.fr}}
\address{$^2$ Department of Mathematics, Khalifa University of Sciences and Technology,\\
		P.O. Box 127788, Abu Dhabi, UAE. 
		              \ead{mohamed.riahi@ku.ac.ae}}
\address{$^3$ Emirates Nuclear Technology Center (ENTC), Khalifa University of Science and Technology, United Arab Emirates}


\begin{abstract}
This paper is concerned with Eddy-Current (EC) nondestructive testing of conductive materials and focuses, in particular, on extending the well-known Linear Sampling Method (LSM) to the case of EC equations.  We first present the theoretical foundation of the LSM in the present context and in the case of point sources. We then explain how this method can be adapted to a realistic setting of EC probes. In the case of identifying the shape of external deposits from impedance measurements taken from inside of the tube (steam generator), we show how the method can be applied to measurements obtained from a sweeping set of coils. Numerical experiments suggest that good results can be achieved using only a few coils and even in the limiting case of backscattering data. 
\end{abstract}

%
\vspace{2pc}
\noindent{\it Keywords}: Eddy-current, Impedance measurement, Inverse imaging, non destructive testing. 

\submitto{\IP}
%
%
%


\section{Introduction}
We investigate the application of the so-called Linear Sampling Method (LSM) \cite{colton1997simple, bookCCH2016, audibert:hal-01422027} to identify conductive inclusions using eddy current probes. We specifically treat the axisymmetric case motivated by the non destructive testing  of tubes with axisymmetric coils. This setting corresponds for instance with the inspection of tubes used in steam generators of nuclear power plants for circulating hot water to produce steams (which is then used by a turbine to produce electricity) \cite{FORSTER197428}. One then is interested in detecting and evaluating the amount of magnetite deposits on the outer parts of the tube using coils that can be inserted inside the tubes \cite{PrusekPhD,jiang:hal-01220230,doi:10.1080/09349849809409630}. Given the high number of tubes, axisymmetric probes (usually referred to as SAX probes) are used in the inspection campaign that requires shutting down the power plant and evacuating the water circulating inside the steam generator.

Given the frequency range of the probes and the conductivity of the tube and the deposits, eddy current model is adapted to describe the measurements. In general, only back-scattering or nearly back-scattering data is measured. This is why mainly  optimization like methods have been used in the literature to tackle the inherent inverse problem \cite{haddar2017robust,riahi2016fast,jiang2011eddy,jiang2016identification, AGHJ2020}. 
In a perspective of studying other measurement settings or provide existing ones with good initial guess, we investigate here the performance of LSM for various measurements configurations. 

We refer to \cite{auld1999review,Blitz1997} and references therein for an overview of wider use of eddy currents in non destructive testings. Inverse shape problems for eddy current models have been studied in different contexts and using different methodologies in \cite{tamburrino2006fast,arnold2013unique,khan2008recursive,li2019learning} that may also be of interest for the application considered here.

We first analyze the idealistic case where coils can be approximated by point sources and measurements can be modeled as point-wise values of the scattered field. We formalize this setting in an axisymmetric configuration of the medium and for the eddy current model. Our analysis of the forward problem follows \cite{haddar:hal-01072091,MR1874449}. We then study the theoretical foundations of the LSM in the case where the deposit is characterized by a change in the conductivity with respect to a reference configuration. The measurements and sources are supposed to lie on an infinite line parallel to the axis of symmetry. Observing that the spread of the incident field is limited to a narrow area surrounding  the source location, the multi-static  measurement matrix is almost band limited (the entries outside some few diagonals may be negligible). This means that numerically, replacing the full matrix by an M-diagonal matrix may lead to an error that is of the order the noise level. This suggests that a good approximation of the measurement matrix may be achieved using only a few number, say $M$ of probes, (i.e. $(2M-1)$-diagonal matrix) that move together along the symmetry axis. At each position, the probes collect the measurements for a column of the matrix. 
This procedure corresponds with commonly used methods in practice for collecting the data. For the application mentioned above,  2 coils are used in general in SAX probes. 

Motivated by these considerations we numerically explore the outcome of the LSM when the full matrix is replaced by its $(2M-1)$-diagonal. We observe for instance that even for $M=1$ (i.e. back-scattering data), a reasonable localization and size estimate of the deposit are obtained.
After discussing the idealistic case of point sources, we extend the numerical algorithm to realistic modelling of the coils (as extended sources) and compute the measurements using the impedance model  \cite{auld1999review} (see also \cite{haddar2017robust}). The physical quantities and characteristic of the coils correspond with the one used in the application mentioned above. An extensive numerical discussion of the algorithm show the viability of our algorithm for this setting for providing quick and qualitatively accurate identification of the location, topology (number of connected components) and vertical dimensions of the deposit. 

The outline of the paper is as follows. We introduce in Section \ref{sec1} the physical model and recall some known results on the well posedness of the underlying PDE model. We present in Section \ref{sec2} the idealistic setting of the inverse problem corresponding with point sources. This requires in particular the introduction of the Green function for the axisymmetric problem. The theoretical foundations of the LSM are studied in Section \ref{sec3}. The last section is dedicated to a numerical validation of the LSM for various settings of the collected data. We start with the case of point sources then consider the case of more realistic models that correspond to SAX probes.
\section{Preambles}
\label{sec1}
Consider a simply-connected domain $\Om\subset\mathbb{R}^3$ with Lipschitz boundary. For a given applied divergence free electric current density $\J$, a frequency $\omega$, a magnetic permeability $\mu > 0$ and an electric conductivity $\sigma \ge 0$, the time harmonic eddy-current equations for the electric field $\E$ can be formulated as 
\begin{equation}\label{full-eddycurrent-equation-E}
\curl\left(\frac{1}{\mu}\curl \E\right) -i\omega\sigma \E =i\omega\J\quad\text{ in } \R^{3}
\end{equation}
supplemented with an appropriate gauge conditions for the divergence of the electric field \cite{rodriguez2010eddy,TR2014}.
Motivated by non destructive inspection of long tubes, we here consider the axi-symmetric configuration where all quantities  are invariant with respect to $\e_{\theta}$ where $(\e_{r},\e_{\theta},\e_{z})$  denotes the canonical basis of the cylindrical coordinate system. We further assume that $\J = J \,  \e_{\theta}$ with $J$ independent from the $\theta$ coordinate. Then the  electric field is azimuthal: $\E=E_{\theta}\e_{\theta}$ and satisfies 
\begin{equation}
\dfrac{\partial}{\partial r}\left( \dfrac{1}{\mu r} \dfrac{\partial}{\partial r}\left(r E_{\theta}\right)\right) 
+ \dfrac{\partial}{\partial z} \left( \dfrac{1}{\mu}  \dfrac{\partial}{\partial z} E_{\theta}\right) + i\omega\sigma E_\theta = -i\omega J \quad \text{in } \R^{2}_{+}, 
\end{equation}
with $\R^{2}_{+} := \{ (r,z): r>0, z\in \R \}$. Due to symmetry, we also  have
  $$
  E_{\theta}|_{r=0} = 0,
$$ 
and we impose a decay condition 
$$
  E_{\theta} \rightarrow 0 \quad \text{as }  r^2+z^2 \rightarrow +\infty, 
$$ 
that models radiation condition at infinity. For a variational study of this problem we refer to \cite{haddar:hal-01072091} and we shall outline in the sequel the main result. Let $\xi >1$ be a fixed parameter. For functions of the variables $(r, z)$ we shall use in the following the short notation $\nabla:=(\partial_r,\partial_z)^T$ to refer to the gradient with respect to these two variables. We then define the weighted functional spaces
$$L^{2}_{\xi}(\Om):=\{v\, \text{measurable}\,|\, \sqrt{{r}\slash{(1+r^2)^\xi}}\,v\in L^2(\Om)\},$$
and
$$\HVS(\Om):=\{v\in L^{2}_{\xi}(\Om)\,|\, \nabla(rv)\slash\sqrt{r}\in L^2(\Om)\}.$$

The direct problem can then be stated as seeking $E_\theta = u \in \HVS(\R^2_+)$ such that 
\begin{equation}\label{axissymmetric-eddycurrent-equation}
\nabla\cdot\left(\dfrac{1}{\mu r} \nabla(r u) \right)  + i\omega\sigma u = -i\omega J  \quad\text{ in } \R^2_+,
\end{equation}
We observe that the boundary condition at $r=0$ and the decay at infinity are automatically included into the solution space. This problem is equivalent to the variational form: $u \in \HVS(\R^2_+)$
\begin{align}\label{eq:fv}
  \int_{\R^{2}_{+}} \frac{1}{\mu r} \nabla(ru)\cdot \nabla(r \bar{v}) \d r\d z 
    - \int_{\R^{2}_{+}} \i\omega \sigma u\bar{v} r \d r\d z
    =  \ell(v) 
    \quad \forall v\in \HVS(\R^2_+),
\end{align}
with 
$$
\ell(v) = \int_{\R^{2}_{+}} \i\omega J \bar{v} r\d r\d z
$$
We then have the following Theorem \cite{haddar:hal-01072091} (see also \cite{MR1874449}).
\begin{theorem}\label{prop:LaxMilgram}
  Assume that $\mu$ and $\sigma$ are in $L^\infty(\R^{2}_{+})$ and that
$\mu(r,z) \ge \mu_*> 0$ on  $\R^{2}_{+}$ and $\sigma=0$ for $r \ge r^*$ sufficiently large. Consider a source term $v \mapsto \ell(v)$ that is antilinear and continuous on $ \HVS(\R^2_+)$. Then, the variational problem \eqref{eq:fv} admits a unique solution $u \in \HVS(\R^2_+)$.
\end{theorem}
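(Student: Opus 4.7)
The plan is to apply the Lax--Milgram theorem to the sesquilinear form
$$
a(u,v) := \int_{\R^{2}_{+}} \frac{1}{\mu r} \nabla(ru)\cdot \nabla(r\bar v)\,\dif r \dif z - \i\omega \int_{\R^{2}_{+}} \sigma\, u\,\bar v\, r\,\dif r \dif z
$$
on the Hilbert space $\HVS(\R^2_+)$, so that \eqref{eq:fv} reads $a(u,v)=\ell(v)$ for all $v\in\HVS(\R^2_+)$. First I would equip $\HVS(\R^2_+)$ with the natural graph norm
$$
\|v\|_{\HVS}^{2} := \int_{\R^{2}_{+}} \frac{r}{(1+r^{2})^{\xi}}|v|^{2}\,\dif r\dif z + \int_{\R^{2}_{+}} \frac{1}{r}\bigl|\nabla(rv)\bigr|^{2}\,\dif r\dif z
$$
and check that it is a Hilbert space (completeness in this weighted setting is routine).

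Continuity of $a$ is handled by Cauchy--Schwarz: the first term is bounded by $\mu_{*}^{-1}\|v\|_{\HVS}\|u\|_{\HVS}$ thanks to $\mu\ge\mu_{*}$, and the second term is bounded using $\sigma\in L^{\infty}$ together with the fact that $\sigma\equiv 0$ for $r\ge r^{*}$, which lets one trade the weight $r$ against $r/(1+r^{2})^{\xi}$ on the support of $\sigma$. Continuity of $\ell$ is assumed.

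The key step is coercivity. Testing with $v=u$ gives
$$
\Re\, a(u,u) = \int_{\R^{2}_{+}} \frac{1}{\mu r}\bigl|\nabla(ru)\bigr|^{2}\,\dif r\dif z \;\ge\; \frac{1}{\|\mu\|_{\infty}}\int_{\R^{2}_{+}} \frac{1}{r}\bigl|\nabla(ru)\bigr|^{2}\,\dif r\dif z,
$$
so the real part already controls the gradient part of $\|u\|_{\HVS}^{2}$. The imaginary part, $\Im\, a(u,u) = -\omega\int \sigma |u|^{2} r$, only gives information where $\sigma>0$ and therefore does not bound the weighted $L^{2}_{\xi}$ norm of $u$ in the exterior region. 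To close the estimate one needs a weighted Hardy--type inequality of the form
$$
\int_{\R^{2}_{+}} \frac{r}{(1+r^{2})^{\xi}}|v|^{2}\,\dif r\dif z \;\le\; C_{\xi}\int_{\R^{2}_{+}} \frac{1}{r}\bigl|\nabla(rv)\bigr|^{2}\,\dif r\dif z
\qquad\forall v\in\HVS(\R^2_+),
$$
which is precisely the place where the assumption $\xi>1$ enters; I would prove it by changing unknown to $w=rv$, performing the $\theta$--integration to view $v$ as an axisymmetric function on $\R^{3}$, and applying the classical three--dimensional Hardy inequality together with a weighted Poincaré inequality at infinity adapted to the decay rate dictated by $(1+r^{2})^{-\xi}$. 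Combined with the gradient bound above, this yields $|a(u,u)|\ge \alpha\|u\|_{\HVS}^{2}$ for some $\alpha>0$, and Lax--Milgram concludes.

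The main obstacle is precisely this weighted Hardy inequality: the form is only coercive in a \emph{non--standard} weighted space, and the exponent $\xi>1$ is sharp for integrability at infinity of the axisymmetric Green kernel. Everything else (continuity, conclusion via Lax--Milgram) is standard once this inequality is available; the detailed proof can be borrowed from \cite{haddar:hal-01072091,MR1874449}.
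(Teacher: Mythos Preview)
Your proposal is correct and matches the approach the paper has in mind: the paper does not give its own proof of this theorem but simply records it as a consequence of \cite{haddar:hal-01072091,MR1874449}, and the label \texttt{prop:LaxMilgram} already signals the intended strategy. Your outline---continuity from the $L^\infty$ bounds and the compact $r$-support of $\sigma$, coercivity of the real part on the gradient term, and then a weighted Hardy-type inequality (valid precisely for $\xi>1$) to recover the full $\HVS$ norm---is exactly the argument developed in those references, so there is nothing to add.
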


\section{Setting of the inverse problem for point sources}
\label{sec2}
We discuss in this section the inverse problem for an idealized configuration where the sources $J$  produced by coils can be considered as point sources. These sources are distributed on 
\begin{eqnarray*}
\Gamma_s &=&\left\{\x=(r,z)\in\R^2_+ \,|\, r=r_{s}\right\}
\end{eqnarray*}
that we shall assume to be located in a non conductive part (here $r_{s}$ is fixed). The reference domain (i.e. the domain that does not contains defaults) is described by a conductivity $\sigma_0$ and a permeability $\mu_0$ that satisfy the assumption of Theorem \ref{prop:LaxMilgram} and for simplicity we also assume that they are independent from the variable $z$.

Let us introduce the Green function associated with $\mu_0=1$ and $\sigma_0 =0$ which is the function $\Green{\cdot}{\xz}$ associated with a point source at $\xz = (r_0, z_0)$ satisfying
\begin{equation}\label{greensfunction}
\nabla \frac{1}{r} \cdot \nabla r \Green{\cdot}{\xz}= -\delta_{\xz}(\cdot) \quad     \text{ in } \R^2_+
\end{equation}
with homogeneous Dirichlet boundary conditions at $r=0$ and which is vanishing at $+\infty$. Noticing that (for $\theta$ being the angle of polar-coordinate system) the function $\Phi(x,y,z) := \Green{\x}{\xz} \sin \theta$ satisfies $\Delta \Phi =  - \sin \theta \delta_{\xz}$ in $\R^3$ with a decaying condition at infinity. It is, therefore, possible to derive an integral representation of $\Green{\cdot}{\xz}$ using the fundamental solution of the Laplace operator in $\R^3$, namely \cite{TR2014}
\begin{equation}\label{green3D-2D}
\Green{\x}{\xz} = \frac{1}{4\pi}\int_0^{2\pi}  \frac{r_0 \sin \theta'}{|r^2 + r_0^2 - 2rr_0 \sin \theta' + |z-z_0|^2|^{1/2}} \d\theta'
\end{equation}
{It has been shown} in \cite{Dini2004GREENFO} that this integral can be analytically identified as
{
\begin{equation}\Green{\x}{\xz} = \frac{1}{2\pi} \sqrt{\frac{r_0}{r}}Q_{1/2}\left(1+ \frac{|\x-\xz|^2}{2rr_0}\right)
\end{equation}
}
where $Q_{\nu}$ is the Legendre function of the second kind, satisfying
$$
(1-t^2) y''(t) - 2 ty'(t) + \nu (1+\nu) y(t) = 0.
$$
The asymptotic behavior of $Q_{\nu}(t) $ for large argument shows that $Q_{1/2}(t) \sim \frac{\pi}{\sqrt{32}} t^{-3/2}$, therefore 
$$
\Green{\x}{\xz} \sim \frac{r_0^2 r}{4 |\x|^3} \quad \text{as } |\x| \to \infty
$$
and 
$$
\Green{\x}{\xz} \sim \frac{r_0^2 r}{4 (r^2 + r_0^2 + |z-z_0^2|)^{3/2}} \quad \text{as } r \to 0. 
$$
This function allows us to define an incident field associated with a point source 
 $\xz\in\Gamma_{s}$ as the function $u^{0}(\cdot,\xz)$ satisfying 
\begin{equation}\label{u0eq}
    \nabla \frac{1}{\mu_0 r} \cdot \nabla r u^{0}(\cdot,\xz) + i\omega\sigma_{0} u^{0}(\cdot,\xz) = -\delta_{\xz}(\cdot) \quad \text{ in } \R^2_+
\end{equation}
with homogeneous Dirichlet boundary conditions at $r=0$ and which is vanishing at $+\infty$. This function can be constructed as 
\begin{equation}
\label{decomposition}
u^{0}(\cdot,\xz) = \mu_0(\xz) \Green{\cdot}{\xz} + \tilde u^{0}({\cdot},{\xz})
\end{equation}
where $\tilde u^{0}({\cdot},{\xz}) \in  \HVS(\R^2_+)$ and is the unique solution of  \eqref{eq:fv}
with $\mu=\mu_0$, $\sigma =\sigma_0$ and 
$$
\ell(v) = \int_{\R^2_+} \left(1-\frac{ \mu_0(\xz)}{\mu_0(\x)}   \right) \nabla r \Green{\x}{\xz} \cdot  \nabla r \bar v(\x) \d\x 
    + \int_{\R^{2}_{+}} \i\omega \sigma_0 \mu_0(\xz) \Green{\x}{\xz} \bar{v}(\x) r \d\x.
$$
We now consider the inverse problem configuration for the imaging of deposits inside the reference media defined by $\mu_0$ and $\sigma_0$. We assume that this deposit occupies a domain $D$ that lies in the region $r_s < r < r_* <\infty$.
\begin{remark}
It is worth noticing that studying the case of deposit in the region $D\subset \{r<r_s\}$ follows the same lines as in the case studied here. In the Engineering application we have in mind magnetite deposits occur in the outer shell side of the tube.
\end{remark}
    Let us denote by $\mu$ and $\sigma$ the functions defining the material properties of the domain with deposit. In particular $\mu=\mu_0$ and $\sigma = \sigma_0$ outside $D$. 

The probes are  point sources located at $\xz \in \Gamma_s$, that generate a field $u(\cdot,\xz)$ satisfying
\begin{equation}\label{u0eq}
    \nabla \frac{1}{\mu r} \cdot \nabla r u(\cdot,\xz) + i\omega\sigma u(\cdot,\xz) = -\delta_{\xz}(\cdot) \quad \text{ in } \R^2_+
\end{equation}
with homogeneous Dirichlet boundary conditions at $r=0$ and which is vanishing at $+\infty$. The field function can be defined as 
$$
u(\cdot,\xz) =  u^{0}({\cdot},{\xz}) + u^{s}({\cdot},{\xz})
$$
where $u^{s}({\cdot},{\xz})\in  \HVS(\R^2_+)$ is the scattered field that can be defined as   the unique solution of  \eqref{eq:fv}
with 
\begin{equation} \label{rhsgen}
\ell(v) = \int_{D} \left(\frac{1}{\mu_0(\x)} -\frac{1}{\mu(\x)}  \right) \nabla r u^{0}(\x,{\xz}) \cdot  \nabla r \bar v(\x) \d \x 
    - \int_{D} \i\omega (\sigma_0-\sigma) u^{0}({\x},{\xz}) \bar{v}(\x) r \d \x.
\end{equation}
Since the deposit $D$ does not intersect $\Gamma_s$, we clearly have that $u^{0}({\cdot},{\xz}) \in \HVS(D)$ and therefore this antilinear form is continuous on   $\HVS(\R^2_+)$. This guarantees by application of Theorem \ref{prop:LaxMilgram} the existence an uniqueness of $u^{s}({\cdot},{\xz})\in  \HVS(\R^2_+)$. Using a test function $\bar v(\y) = u^0(\y,\x)$ and integrating by parts leads to the following representation theorem.
\begin{proposition} \label{Intpresentation}
The scattered  field $u^{s}(\cdot,\xz)$ defined above satisfies for $r_0,r >0$ the reciprocity relation $r u^{s}(\x,\xz) = r_0 u^{s}(\xz,\x) $   as well as the integral representation
\begin{eqnarray}\label{us_formula}
\label{usImpedeq}
    r u^{s}(\x,\xz) &= & \int_{D} \left(\frac{1}{\mu_0(\y)} -\frac{1}{\mu(\y)}  \right) \nabla (r' u^{0}({\y},\x)) \cdot  \nabla ( r'  u(\y,\x_0) )\d \y 
    \\
    && +{i\omega}\int_{D} (\sigma-\sigma^{0}) u(\y,\xz) {u^{0}(\y,\x)} r' \,\d\y \notag.
\end{eqnarray}
\end{proposition}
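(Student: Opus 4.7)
The plan is to derive the integral representation from a Green's second identity applied to the pair of equations satisfied by $u(\cdot,\xz)$ and $u^{0}(\cdot,\x)$. First, I would multiply the strong form for $u(\cdot,\xz)$ (the defective equation with coefficients $\mu,\sigma$ and source $\delta_{\xz}$) by $r'\,u^{0}(\y,\x)$, integrate over $\R^{2}_{+}$, and integrate by parts; this yields
$$
-\int_{\R^{2}_{+}} \frac{1}{\mu r'}\nabla(r'u(\y,\xz))\cdot\nabla(r'u^{0}(\y,\x))\,\dif\y + \int_{\R^{2}_{+}} i\omega\sigma\,u(\y,\xz)\,u^{0}(\y,\x)\,r'\,\dif\y = -r_0\,u^{0}(\xz,\x).
$$
The symmetric computation starting from the reference equation for $u^{0}(\cdot,\x)$ (coefficients $\mu_0,\sigma_0$, source $\delta_{\x}$), tested against $r'\,u(\y,\xz)$, gives the analogous identity with $(\mu_0,\sigma_0)$ in place of $(\mu,\sigma)$ and right-hand side $-r\,u(\x,\xz)$. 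Subtracting, the contributions outside $D$ vanish since $\mu=\mu_0$ and $\sigma=\sigma_0$ there, and the remaining integrals over $D$ reproduce exactly the two volume terms of \eqref{us_formula}. The residual point contribution $r\,u(\x,\xz)-r_0\,u^{0}(\xz,\x)$ collapses to $r\,u^{s}(\x,\xz)$ once one uses the reference-field reciprocity $r_0\,u^{0}(\xz,\x)=r\,u^{0}(\x,\xz)$.

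This last reciprocity identity, and the analogous one for $u$, are themselves obtained by the same device: applying Green's identity to the pair $(u^{0}(\cdot,\x),u^{0}(\cdot,\xz))$ with the common coefficients $(\mu_0,\sigma_0)$ produces a bilinear expression in the two fields that is manifestly symmetric under exchange of $\x$ and $\xz$; equating the two resulting $\delta$-contributions forces $r_0\,u^{0}(\xz,\x)=r\,u^{0}(\x,\xz)$, and the same argument with $(\mu,\sigma)$ yields $r_0\,u(\xz,\x)=r\,u(\x,\xz)$. The reciprocity $r\,u^{s}(\x,\xz)=r_0\,u^{s}(\xz,\x)$ announced in the proposition then follows by subtraction, since $u^{s}=u-u^{0}$.

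The main technical hurdle is justifying rigorously the integrations by parts, since both $u(\cdot,\xz)$ and $u^{0}(\cdot,\x)$ inherit from $\Green{\cdot}{\cdot}$ via the decomposition \eqref{decomposition} a logarithmic-type singularity at their source points, and the weight $1/r'$ degenerates at $r'=0$. I plan to carry out the integration by parts on $\R^{2}_{+}\setminus(B_\epsilon(\x)\cup B_\epsilon(\xz))$ and let $\epsilon\to 0$: the boundary integrals on the excised circles converge to the required point values thanks to the asymptotics $Q_{1/2}(1+t)\sim -\tfrac12\log t$ as $t\to 0^{+}$ combined with the $\HVS$-regularity of the remainder $\tilde u^{0}$ provided by Theorem~\ref{prop:LaxMilgram}, while the cross boundary contributions vanish by continuity of the non-singular factor. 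Decay at infinity and the homogeneous Dirichlet condition at $r'=0$, both built into $\HVS(\R^{2}_{+})$, eliminate the boundary terms on the outer and axial parts of the domain.
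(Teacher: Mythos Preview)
Your proposal is correct and follows essentially the same route as the paper, which merely hints that the identity comes from plugging the test function $\bar v(\y)=u^{0}(\y,\x)$ into the variational formulation \eqref{eq:fv} for $u^{s}(\cdot,\xz)$ with right-hand side \eqref{rhsgen}, and then integrating by parts. The only minor difference is that the paper works directly with the weak formulation for $u^{s}$ (which lies in $\HVS(\R^{2}_{+})$ and is therefore regular), so that only the singularity of $u^{0}(\cdot,\x)$ at $\x$ has to be dealt with; your version starts from the strong equation for $u(\cdot,\xz)$ and thus carries two singular points through the computation, which is why you need the two excised balls, but the outcome is identical.
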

\noindent It is worth noticing that the reciprocity relation is also satisfied by $\Green{\cdot}{\cdot}$ as well as $u^{0}(\cdot,\cdot) $.

The inverse problem we would like to address first is the problem of reconstructing $D$ from measurements of $ u^{s}(\x,{\xz})$ for all $\x$ and $\xz$ in $\Gamma_s$ using the so-called Linear Sampling Method. We shall later explain how this method can provide an inversion method for realistic setting (related to non destructive testing of conducting tubes) using few  eddy-current coils (and even back-scattering configurations). 

A key ingredient in the justification of the method is the following unique continuation argument associated with measurements. 

\begin{theorem} \label{uniquecontinuation}
Let $D$ be a bounded domain in the region $r > r_s$ with connected complement in $\R^2_+$. Let $u_1$ and $u_2$ in $\HVS(\R^2_+\setminus \overline D)$  and satisfy
\begin{equation}\label{u}
    \nabla \frac{1}{\mu_0 r} \cdot \nabla r u_i + i\omega\sigma_{0} u_i = 0  \quad \text{ in } \R^2_+ \setminus \overline D.
\end{equation}
If $u_1 = u_2$ on $\Gamma_s$, then  $u_1 = u_2$ in $\R^2_+ \setminus D$.
\end{theorem}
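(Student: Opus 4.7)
Set $w := u_1 - u_2 \in \HVS(\R^2_+\setminus\overline D)$, so that $w$ solves the homogeneous equation \eqref{u} and satisfies $w|_{\Gamma_s}=0$. The strategy is to propagate the vanishing of $w$ off $\Gamma_s$ in two stages: first into the inner strip $\{0<r<r_s\}$ by a weighted energy identity, and then into the outer region $\{r>r_s\}\setminus\overline D$ by classical unique continuation, finally concluding with the connectedness of $\R^2_+\setminus\overline D$.

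In the inner strip $\Omega_{\mathrm{int}}:=\{(r,z)\in\R^2_+:r<r_s\}$ the restriction of $w$ lies in $\HVS(\Omega_{\mathrm{int}})$, solves the homogeneous version of \eqref{eq:fv}, and inherits the Dirichlet datum $w=0$ on $\Gamma_s$ together with the axis and $|z|\to\infty$ behaviour built into the weighted space. Testing with $\bar w$ exactly as in the derivation of \eqref{eq:fv} and separating real and imaginary parts yields $\int_{\Omega_{\mathrm{int}}}(\mu_0 r)^{-1}|\nabla(rw)|^2\d r\d z = 0$ and $\omega\int_{\Omega_{\mathrm{int}}}\sigma_0|w|^2 r\d r\d z = 0$, once the boundary contributions are shown to vanish: at $|z|\to\infty$ from the $\HVS$-decay (after a standard truncation in $z$), on $\Gamma_s$ from the Dirichlet trace, and on $\{r=0\}$ from the built-in behaviour of $\HVS$. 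Since $\mu_0\ge\mu_*>0$ this forces $\nabla(rw)\equiv 0$, so $rw$ is constant on the connected strip $\Omega_{\mathrm{int}}$; the trace on $\Gamma_s$ fixes that constant to zero, giving $w\equiv 0$ on $\Omega_{\mathrm{int}}$.

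Next I would use this interior vanishing to generate full Cauchy data for $w$ on $\Gamma_s$ from the outside. Since $\Gamma_s$ sits strictly inside the open set $\R^2_+\setminus\overline D$ in which $w$ solves \eqref{u}, interior elliptic regularity—applied, e.g., after the substitution $v=rw$, which turns \eqref{u} into a uniformly elliptic second-order equation whose principal part is essentially the Laplacian—gives $w\in H^2_{\mathrm{loc}}$ in a two-sided neighbourhood of $\Gamma_s$. The Dirichlet and Neumann traces of $w$ from $\Omega_{\mathrm{ext}}:=\{r>r_s\}\setminus\overline D$ therefore agree with those from $\Omega_{\mathrm{int}}$ and are both zero. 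An Aronszajn-type unique-continuation theorem applied to $v$ then gives $w\equiv 0$ in the connected component of $\Omega_{\mathrm{ext}}$ touching $\Gamma_s$, and the hypothesis that $\R^2_+\setminus\overline D$ is connected extends this to all of $\Omega_{\mathrm{ext}}$.

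The delicate step is the unique-continuation invocation: it requires enough regularity on $\mu_0$ and $\sigma_0$ (Lipschitz in the background, which the axisymmetric reference configuration supplies) and a clean reformulation near $\Gamma_s$ so that the weight $r$, harmless since $r_s>0$, does not interfere with the hypotheses of the standard theorem. The energy step is otherwise a direct replay of the variational framework of Theorem \ref{prop:LaxMilgram} restricted to a strip, exploiting that $w|_{\Gamma_s}=0$ plays the role of a homogeneous Dirichlet boundary condition.
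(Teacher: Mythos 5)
Your argument is essentially the paper's own proof: vanishing on the strip $\{0<r<r_s\}$ via the (coercive) energy identity for the homogeneous variational problem with the Dirichlet condition on $\Gamma_s$ and at $r=0$, followed by classical unique continuation across $\Gamma_s$ into the connected exterior $\R^2_+\setminus\overline D$. The only cosmetic difference is that the paper phrases the strip step as coercivity of the sesquilinear form on $\HVS(\Omega)\cap H^1_0(\Omega)$ and invokes unique continuation for operators with piecewise constant coefficients and Lipschitz interfaces (rather than Lipschitz coefficients, which is the more accurate description of the reference medium), but the substance of both steps is the same.
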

\begin{proof}
Denoting $\Omega := \{\x , \; 0 < r < r_s\}$, we observe that $v= u_1 -u_2 \in \HVS(\Omega) \cap H^1_0(\Omega)$ and verifies the variational formulation \eqref{eq:fv} with $\mu=\mu_0$, $\sigma=\sigma_0$, $\ell =0$  and   $\HVS(\R^2_+)$ replaced with $\HVS(\Omega) \cap H^1_0(\Omega)$. The  coercivity of the associated sesquilinear form in $\HVS(\Omega) \cap H^1_0(\Omega)$ implies that $u_1 = u_2$ in $\Omega$. We then conclude that  $u_1 = u_2$ in $\R^2_+ \setminus D$ using classical unique continuation argument for elliptic second order operators with piecewise constant coefficients (with  Lipschitz interfaces). 
\end{proof} 
\section{Foundations of the linear sampling method and algorithm}
 \label{sec3}
The extension of the linear sampling method to the current setting does not raise major difficulties or differences with respect to the classical setting of method for inverse scattering problem \cite{colton1997simple,bookCCH2016,audibert:hal-01422027}. This is why we shall give in the following only the outline of this method and eventually the key points of the proofs. The assumptions made in the previous section for $\sigma$, $\sigma_0$, $\mu$ and $\mu_0$  so that the forward problems defining $u^0(\cdot, \x_0)$ and $u(\cdot, \x_0)$ are well posed are assumed to hold true and we will not further indicate that in the subsequent theorems or results.

 To simplify the technical details we further assume here that $\mu=\mu_0$ and therefore the deposit is characterized only by variation of the conductivity value $\sigma$.

 We introduce the measurement operator $\mathcal{Z}$ as follows: 
\begin{eqnarray}\label{vigeq}
    \mathcal{Z}&:& L^{2}(\Gamma_s)\longrightarrow L^{2}(\Gamma_s)\\
    && g \quad\longmapsto\quad (\mathcal{Z}g)(\x) := \int_{\Gamma_s}  u^{s}(\x,{\xz}) g(\xz)\d s(\xz) \quad \x \in \Gamma_s
\end{eqnarray}
The linearity of $u^{s}$ with respect to $u^{0}$ shows that $\mathcal{Z}g$ corresponds with the trace of  $u^{s}_g$  on $\Gamma_s$ where 
 $u^{s}_g\in  \HVS(\R^2_+)$ is the unique solution of  \eqref{eq:fv}
with source term $\ell$ defined by \eqref{rhsgen} where $u^{0}$ is replaced by the  single layer potential  
\begin{equation}\label{iHWF}
    v^{0}_{g}(\x) = \int_{\Gamma_{s}} u^{0}(\x,\xz)g(\xz)\,\d s(\xz).
\end{equation}
We then have a natural decomposition of the measurement operator as 
$$
\mathcal{Z} = \mathcal{G} \circ \mathcal{S}
$$
where the  operator $ \mathcal{S} : L^2(\Gamma_s) \to L^2(D)$ is defined by $\mathcal{S} g := v^{0}_{g}|_D$ and where the solution operator $\mathcal{G} : L^2(D) \to L^2(\Gamma_s)$ is defined by $\mathcal{G}(v) = w|_{\Gamma_s}$ with  $w \in  \HVS(\R^2_+)$ being the unique solution of  \eqref{eq:fv}
with source term $\ell$ defined by \eqref{rhsgen} replacing $u^{0}$ by the function $v \in L^2(D)$.
We first prove the following important properties of the operator $\mathcal{S}$.
\begin{lemma} \label{Stheorem}
The operator $ \mathcal{S} : L^2(\Gamma_s) \to L^2(D)$ is injective. The range of this operator is dense in 
\begin{equation}\label{DensityLemma}
V_g(D) := \left\{ v \in L^{2}(\Oz), \quad\,\nabla \frac{1}{\mu_0 r} \cdot \nabla r v + i\omega \sigma_0 v  =0, \text{ in } \Oz \right\}.
\end{equation}
\end{lemma}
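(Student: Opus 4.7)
For injectivity, suppose $\mathcal{S}g = 0$, so that $v^0_g \equiv 0$ in $D$. The single layer $v^0_g$ defined in (\ref{iHWF}) is smooth away from $\Gamma_s$ and satisfies the homogeneous PDE in $\R^2_+\setminus \Gamma_s$; classical unique continuation for second-order elliptic operators propagates the vanishing on $D \subset \{r>r_s\}$ to the whole connected open region $\{r>r_s\}$. Continuity of the single layer across $\Gamma_s$ then yields a vanishing interior trace on $\Gamma_s$, and the well-posedness of the Dirichlet problem in $\{r<r_s\}$ (with the boundary condition at $r=0$ and decay at infinity) forces $v^0_g \equiv 0$ there as well. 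Globally, $v^0_g$ solves $\nabla\cdot\frac{1}{\mu_0 r}\nabla(rv^0_g)+i\omega\sigma_0 v^0_g = -g\,\delta_{\Gamma_s}$ in the distributional sense on $\R^2_+$, so $g=0$ follows at once from the identification of the distributional jump of the conormal derivative of $v^0_g$ across $\Gamma_s$.

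For density, the inclusion $\mathrm{Range}(\mathcal{S})\subset V_g(D)$ is immediate since the source of $u^0(\cdot,\xz)$ lies on $\Gamma_s$, disjoint from $D$. It therefore suffices to take $\phi\in V_g(D)$ with $(\mathcal{S}g,\phi)_{L^2(D)} = 0$ for every $g\in L^2(\Gamma_s)$ and show $\phi = 0$. The natural dual object is the volume potential
$$h(\xz) := \int_D u^0(\xz,\x)\,\overline{\phi(\x)}/r(\x)\,d\x, \qquad \xz\in\R^2_+.$$
Using the reciprocity $r\,u^0(\x,\xz)=r_0\,u^0(\xz,\x)$ from Proposition \ref{Intpresentation} together with Fubini, the orthogonality hypothesis rewrites as $h\equiv 0$ on $\Gamma_s$. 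Since $h$ satisfies the homogeneous equation in $\R^2_+\setminus \overline D$, lies in $\HVS$ there, and decays at infinity, Theorem \ref{uniquecontinuation} applied to the pair $(h,0)$ yields $h\equiv 0$ in $\R^2_+\setminus\overline D$.

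To close the argument I would exploit that, as a volume potential with $L^2$ density supported in $D$, the function $h$ is in $H^2_{\mathrm{loc}}$ across $\partial D$, so both $h$ and $\partial_n(rh)$ have continuous traces across $\partial D$; their vanishing from outside forces them to vanish from the inside as well. Inside $D$, $h$ satisfies
$$\nabla\cdot\frac{1}{\mu_0 r}\nabla(rh)+i\omega\sigma_0 h = -\overline{\phi}/r,$$
while $\phi\in V_g(D)$ solves the homogeneous equation. Multiplying the equation for $h$ by $r\phi$ and integrating over $D$, two successive integrations by parts -- with all boundary terms vanishing thanks to the zero Cauchy data for $h$ on $\partial D$ -- force the $i\omega\sigma_0$ cross-term to cancel and deliver $\|\phi\|_{L^2(D)}^2 = 0$, hence $\phi = 0$. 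The delicate point I expect to have to handle most carefully is the bookkeeping with the $r$-weights and complex conjugation: because the eddy-current operator is not self-adjoint when $\sigma_0\neq 0$, one cannot simply use $L^2$ duality, and it is precisely the pairing against $r\phi$ (rather than $r\bar\phi$), combined with $\phi$ solving the homogeneous equation, that makes the $\sigma_0$ terms cancel in the two-step integration by parts.
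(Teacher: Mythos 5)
Your proof is correct and follows essentially the same route as the paper: injectivity via the single-layer jump relations across $\Gamma_s$ combined with unique continuation in $\{r>r_s\}$ and well-posedness of the Dirichlet problem in $\{r<r_s\}$, and density by showing the adjoint (realized as a volume potential vanishing on $\Gamma_s$, hence outside $D$ by Theorem \ref{uniquecontinuation}) is injective on $V_g(D)$ through a double integration by parts with the unconjugated weighted pairing. The only differences are normalization choices (your density $\overline{\phi}/r$ versus the paper's weighted duality $\int_D f\,g\,r\,\d\x$), which do not change the argument.
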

\begin{proof}
We first prove the injectivity. We observe that according to \eqref{green3D-2D} and the preceding discussion,
if we set $g\in L^2(\Gamma_s)$
$$
SL(g)  (\x) := \int_{\Gamma_{s}} \mathcal{G}(\x,\xz)g(\xz)\,\d s(\xz).
$$
then 
\begin{equation}
SL(g)  (\x)\sin(\theta) = \int_{\Gamma_s \times [0, 2\pi]} \frac{\tilde g (\mathbf{x}_0)}{4\pi|\mathbf{x} -\mathbf{x}_0|} ds(\bf{x}_0)
\end{equation}
where $\mathbf{x} = (r\cos\theta, r\sin\theta,z)$, $\mathbf{x}_0= (r_0\cos\theta', r_0\sin\theta',z_0)$ and $\tilde g (\mathbf{x}_0) = g(\x_0) \sin \theta'$. This remark allows us to deduce that $SL(g) $ has the same continuity properties across $\Gamma_s$ as the single layer potential associated with Laplace operator in $\R^3$ \cite{McLean}. In particular $SL(g)$ is continuous across $\Gamma_s$ while its normal derivative  (derivative with respect to $r$ here) has a jump across $\Gamma_s$ equals to $- g$. Given the decomposition \eqref{decomposition} of $u_0(\cdot, \x_0)$ and the fact that $\tilde u_0(\cdot, \x_0)$ is regular in the neighborhood of $\Gamma_s$, one deduces that $v^{0}_{g}$ has the same continuity properties as $SL(g)$ across $\Gamma_s$. Now assume that  $\mathcal{S} g = 0$. This implies that $ v^{0}_{g}=0$ in $D$. Since $v^{0}_{g}$ satisfies 
\begin{equation}\label{v0g}
    \nabla \frac{1}{\mu_0 r} \cdot \nabla rv^{0}_{g}  + i\omega\sigma_{0} v^{0}_{g} = 0  \quad \text{ in } \R^2_+ \setminus \Gamma_s.
\end{equation}
we deduce, using classical unique continuation arguments for elliptic second order operators with piecewise constant coefficients,  that $v^{0}_{g} = 0$ for $r > r_s$. This implies in particular that $v^{0}_{g} = 0$ on $\Gamma_s$. Following the same reasoning as in the proof of Theorem \ref{uniquecontinuation}, we then deduce that $v^{0}_{g} = 0$ for $0<r < r_s$ since  it satisfies the homogeneous Dirichlet problem in the region $0<r < r_s$. The jump properties of the normal derivative of across $\Gamma_s$ finally implies that $g=0$ which conclude the proof of injectivity. For the denseness of the range, it is sufficient to prove that the adjoint operator $\mathcal{S}^*$ is injective on $V_g(D)$. It is natural to consider the adjoint with respect to the duality product $\int_D f(\x) g(\y) r d\x$, in which case
$$
\mathcal{S}^*   v  (\x_0) = \int_D u^0(\x, \x_0) v(\x) r d\x 
$$ 
Denote by $r_0 w(\x_0)$ the function defined by the right hand side for $\x_0 \in \R_+^2$. We then have $w\in \HVS(\R^2_+)$ and satisfies (similarly to the statement in Proposition \ref{Intpresentation}
\begin{equation}\label{w0g}
    \nabla \frac{1}{\mu_0 r} \cdot \nabla rw  + i\omega\sigma_{0} w = - \overline v|_D \quad \text{ in } \R^2_+ 
\end{equation}
with Dirichlet boundary condition at $r=0$. Assuming that $\mathcal{S}^*   v =0$ implies that $w$ vanishes on $\Gamma_s$ and by Theorem \ref{uniquecontinuation}, $w=0$ in $\R_+^2 \setminus D$. Multiplying \eqref{w0g} with $r v$ and integrating over $D$ implies 
after applying the Green Theorem twice (this can be justified using a density argument)
$$
\int_D |v|^2 rd\x = \int_D w (\nabla \frac{1}{\mu_0 r} \cdot \nabla rv  + i\omega\sigma_{0} v) r d\x = 0 
$$
which implies $v=0$ and concludes the proof.
\end{proof}

The study of LSM for penetrable media requires the analysis of so-called interior transmission problems. In our context this problem can be formulated as: seek $(w, v)\in H^2(\Oz)\times L^2(\Oz)$ such that
\begin{equation}\label{ITP}
\tag{ITP}
\begin{cases}
 \nabla \frac{1}{\mu_0 r} \cdot \nabla r w + i\omega \sigma w = - i\omega(\sigma-\sigma_0) v & \text{ in } \Oz\\
\nabla \frac{1}{\mu_0 r} \cdot \nabla r v + i\omega \sigma_0 v =0  & \text{ in } \Oz\\
w = f  & \text{ on } \partial\Oz\\
\dfrac{\partial w}{\partial \n} = h  & \text{ on } \partial\Oz
\end{cases}
\end{equation}
where $f\in H^{3/2}(\partial\Oz)$ and $h\in H^{1/2}(\partial\Oz)$. The well posedness of this problem is an essential assumption for the following arguments justifying the Linear Sampling method. Let us remark that since we assume that $D$ is bounded and does not touch $r=0$,  the study of this problem follows the same lines as the study of the well posedness of this problem where the operator $\nabla \frac{1}{\mu_0 r} \cdot \nabla r$ is replaced with the operator  $\Delta$. For instance this problem is of Fredholm type as long as $\sigma-\sigma_0$ is positive definite or negative definite in a neighborhood of the boundary of $D$. The uniqueness of solution can be established for all $\omega$ if one assumes that $\sigma_0 = 0$ and $\sigma$ is positive (on some sub-domain of) $D$. The study of \ref{ITP} in the case where $D$ touches $r=0$ or in the case where the domain is unbounded may require additional arguments than those classically used in \cite{bookCCH2016}. A detailed discussion of these issues are out of the scope of this work and may be the subject of a future work. For the present case we restrict ourselves to assuming that $\omega$, $\mu_0$, $\sigma$ and $\sigma_0$ are such that \eqref{ITP} admits a unique solution.

For a given point $\zs$ in $\R_+^2$, we denote by $\phi_\zs \in L^2(\Gamma_s)$ the function defined by
$$
\phi_\zs (\x) := u^0(\x, \zs)  \quad \x \in \Gamma_s
$$
\begin{theorem} \label{Gtheorem}
Assume that \eqref{ITP} is well posed. Then the operator $\mathcal{G}$ is injective on $V_g(D)$. Moreover, the equation  $\mathcal{G}(v) =  \phi_\zs$ admits a solution $v\in V_g(D)$ if and only if $\zs \in D$.
\end{theorem}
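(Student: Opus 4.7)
My plan is to prove the theorem in three stages. For \textbf{injectivity}, given $v\in V_g(D)$ with $\mathcal{G}(v)=0$, let $w\in\HVS(\R^2_+)$ be the scattered field associated to $v$ in the definition of $\mathcal{G}$. Since $w|_{\Gamma_s}=0$ and $w$ satisfies the homogeneous equation outside $D$, Theorem \ref{uniquecontinuation} gives $w\equiv 0$ on $\R^2_+\setminus D$. The Cauchy data of $w$ on $\partial D$ therefore vanish, so $(w|_D,v)$ solves \eqref{ITP} with $f=h=0$. Well-posedness of \eqref{ITP} forces $v=0$.

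For \textbf{existence when $\zs\in D$}: since $\zs$ lies in the interior of $D$, $u^0(\cdot,\zs)$ is smooth in a neighbourhood of $\partial D$, so the traces $f:=u^0(\cdot,\zs)|_{\partial D}\in H^{3/2}(\partial D)$ and $h:=(\partial u^0(\cdot,\zs)/\partial\n)|_{\partial D}\in H^{1/2}(\partial D)$ are admissible data for \eqref{ITP}. The resulting solution $(w,v)\in H^2(D)\times L^2(D)$ has $v\in V_g(D)$ by construction. Define
\[W:=w\ \text{on}\ D, \qquad W:=u^0(\cdot,\zs)\ \text{on}\ \R^2_+\setminus\overline{D}.\]
The matching of Dirichlet and Neumann data across $\partial D$ eliminates distributional jump contributions, and combined with the $H^2$ interior regularity of $w$ and the decay of $u^0(\cdot,\zs)$ at infinity, this produces $W\in\HVS(\R^2_+)$. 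Since $\zs\in D$, $W$ solves the homogeneous equation outside $D$ and has the prescribed source $-i\omega(\sigma-\sigma_0)v$ inside $D$. By uniqueness in \eqref{eq:fv}, $W$ is exactly the field defining $\mathcal{G}(v)$, hence $\mathcal{G}(v)=W|_{\Gamma_s}=u^0(\cdot,\zs)|_{\Gamma_s}=\phi_\zs$.

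For the \textbf{converse}, assume $v\in V_g(D)$ satisfies $\mathcal{G}(v)=\phi_\zs$ while $\zs\notin D$, and let $W$ be the field defining $\mathcal{G}(v)$. Then $W$ and $u^0(\cdot,\zs)$ agree on $\Gamma_s$ and both solve the homogeneous equation on $\R^2_+\setminus(\overline D\cup\{\zs\})$. Applying Theorem \ref{uniquecontinuation} in $\R^2_+\setminus\overline D$ and invoking standard unique continuation across this connected set forces $W=u^0(\cdot,\zs)$ wherever both are defined. However, $u^0(\cdot,\zs)$ inherits a logarithmic singularity at $\zs$ from the asymptotics $Q_{1/2}(t)\sim -\tfrac{1}{2}\ln(t-1)$ as $t\to 1^+$, whose gradient fails the weighted $L^2$ integrability encoded in $\HVS$. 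This contradicts $W\in\HVS(\R^2_+)$ and rules out both $\zs\in\R^2_+\setminus\overline D$ and $\zs\in\partial D$.

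The \textbf{main obstacle} is the converse direction: one needs to push unique continuation rigorously through the punctured region $\R^2_+\setminus(\overline D\cup\{\zs\})$ (which remains connected because $D$ has connected complement in $\R^2_+$) and then convert the singularity of $u^0(\cdot,\zs)$ into an explicit failure of the weighted $L^2$ condition defining $\HVS$. A secondary technical point in the existence part is verifying that the piecewise-defined $W$ actually lies in $\HVS(\R^2_+)$ and weakly solves \eqref{eq:fv} globally; this reduces to the $H^2\times L^2$ regularity of the \eqref{ITP} solution together with the distributional transmission conditions on $\partial D$.
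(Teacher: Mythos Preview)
Your proposal is correct and follows essentially the same approach as the paper's proof: unique continuation via Theorem~\ref{uniquecontinuation} reduces injectivity to the homogeneous \eqref{ITP}, the existence part solves \eqref{ITP} with Cauchy data of $u^0(\cdot,\zs)$ on $\partial D$ and glues the resulting $w$ with $u^0(\cdot,\zs)$ outside, and the converse obtains a contradiction from $u^0(\cdot,\zs)\notin \HVS(\R^2_+\setminus\overline D)$. Your write-up is in fact more explicit than the paper's sketch (which simply points to the classical LSM arguments in \cite{bookCCH2016}), particularly in identifying the logarithmic singularity of $Q_{1/2}$ and in spelling out the gluing step; the only cosmetic difference is that the paper phrases the obstruction as failure of $H^1$-membership rather than of the weighted $\HVS$-norm, which amounts to the same thing near the interior singularity.
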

\begin{proof}
Using the unique continuation argument of Theorem \ref{uniquecontinuation} and the fact that $u^0(\cdot, \zs) \notin H^1(\R^2_+\setminus \overline D)$  if $\zs \notin D$, the proof of this theorem follows the same arguments as in \cite{bookCCH2016}. We here outline the main arguments. If $\mathcal{G}(v) = 0$ then by Theorem \ref{uniquecontinuation}, this is equivalent to the existence of a solution to the homogeneous interior transmission problem \eqref{ITP}. This implies that $v =0$. If $\zs \in D$, the existence of a solution to $\mathcal{G}(v) =  \phi_\zs$ is ensured by the existence of a solution to \eqref{ITP} with
$$
f :=  u^0(\x, \zs)|_{\Gamma_s} \quad h:= \dfrac{\partial u^0(\x, \zs)}{\partial \n} |_{\Gamma_s}.
$$
If $\zs \notin D$ then \ref{uniquecontinuation} ensures that $w=u^0(\cdot, \zs)$ in $\R^2_+\setminus \overline D$ which contradicts the fact that $u^0(\cdot, \zs) \notin H^1(\R^2_+\setminus \overline D)$.
\end{proof}
As a corollary of Theorem \ref{Stheorem} and Theorem \ref{Gtheorem}, we obtain the following Theorem.
\begin{theorem}
Assume that \eqref{ITP} is well posed. then the operator $\mathcal{Z}$ is injective with dense range. Moreover, the following holds.
\begin{itemize}
\item If $\zs \in D$, then there exists a sequence  $g^\epsilon$ such that $\| \mathcal{Z} g^\epsilon -  \phi_\zs \|_{L^2(\Gamma_s)} \to 0 $ as $\epsilon \to 0$ and $\lim_{\epsilon \to 0} \| \mathcal{S} g^\epsilon \|_{L^2(D)} < + \infty$.
\item  If $\zs \notin D$ then for any sequence $g_\epsilon$ such that $\| \mathcal{Z} g^\epsilon -  \phi_\zs \|_{L^2(\Gamma_s)} \to 0 $ as $\epsilon \to 0$, $\lim_{\epsilon \to 0} \| \mathcal{S} g^\epsilon \|_{L^2(D)} = + \infty$.
\end{itemize}
\end{theorem}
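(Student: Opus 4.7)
The strategy is to leverage the factorization $\mathcal{Z}=\mathcal{G}\circ\mathcal{S}$ together with the properties of $\mathcal{S}$ and $\mathcal{G}$ already established in Lemma~\ref{Stheorem} and Theorem~\ref{Gtheorem}. First I would note that $\mathcal{S}$ in fact takes values in $V_g(D)$, since by \eqref{v0g} the layer potential $v^{0}_{g}$ solves the homogeneous eddy-current equation on $\R^2_+\setminus\Gamma_s\supset D$. Injectivity of $\mathcal{Z}$ is then immediate: if $\mathcal{Z}g=0$, then $\mathcal{G}(\mathcal{S}g)=0$ with $\mathcal{S}g\in V_g(D)$, so Theorem~\ref{Gtheorem} forces $\mathcal{S}g=0$, and Lemma~\ref{Stheorem} then gives $g=0$. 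For the dense range I would use the reciprocity relation $r u^{s}(\x,\xz)=r_0 u^{s}(\xz,\x)$ from Proposition~\ref{Intpresentation}: on $\Gamma_s\times\Gamma_s$ one has $r\equiv r_0\equiv r_s$, so the integral kernel of $\mathcal{Z}$ is symmetric and hence $\mathcal{Z}^{\ast}h=\overline{\mathcal{Z}\bar h}$. Injectivity of $\mathcal{Z}$ transfers to $\mathcal{Z}^{\ast}$, and the Hilbert space identity $\overline{\mathrm{Range}(\mathcal{Z})}=\mathrm{Ker}(\mathcal{Z}^{\ast})^{\perp}$ then yields density.

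For the first bullet, given $\zs\in D$, Theorem~\ref{Gtheorem} supplies $v_{\zs}\in V_g(D)$ with $\mathcal{G}(v_{\zs})=\phi_{\zs}$. Using the density part of Lemma~\ref{Stheorem} I choose $g^{\epsilon}\in L^{2}(\Gamma_s)$ with $\|\mathcal{S}g^{\epsilon}-v_{\zs}\|_{L^{2}(D)}<\epsilon$. Continuity of $\mathcal{G}$ yields $\|\mathcal{Z}g^{\epsilon}-\phi_{\zs}\|_{L^{2}(\Gamma_s)}\to 0$, while $\|\mathcal{S}g^{\epsilon}\|_{L^{2}(D)}\leq\|v_{\zs}\|_{L^{2}(D)}+\epsilon$ stays bounded, giving the required uniform control in the limit.

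For the second bullet I would argue by contradiction: suppose $\zs\notin D$ yet some sequence $g^{\epsilon}$ satisfies $\mathcal{Z}g^{\epsilon}\to\phi_{\zs}$ in $L^{2}(\Gamma_s)$ with $\|\mathcal{S}g^{\epsilon}\|_{L^{2}(D)}$ bounded. Pass to a subsequence with $\mathcal{S}g^{\epsilon}\rightharpoonup v$ weakly in $L^{2}(D)$. The space $V_g(D)$ is weakly closed in $L^{2}(D)$, since its defining PDE is linear and passes to the weak limit when tested against compactly supported smooth functions; thus $v\in V_g(D)$. Under the standing assumption $\mu=\mu_0$, the source $\ell$ in \eqref{rhsgen} reduces to $-\int_{D}i\omega(\sigma-\sigma_0)\,v\,\bar\phi\, r\,\d\x$, a bounded linear functional of $v\in L^{2}(D)$; consequently $\mathcal{G}\colon L^{2}(D)\to L^{2}(\Gamma_s)$ is a bounded linear, hence weak-to-weak continuous, operator. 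Passing to the limit gives $\mathcal{G}(v)=\phi_{\zs}$, which by Theorem~\ref{Gtheorem} forces $\zs\in D$, a contradiction. The principal technical obstacle is this second bullet, where weak compactness must be combined with both the weak closedness of $V_g(D)$ and the weak continuity of $\mathcal{G}$; the simplifying assumption $\mu=\mu_0$ is precisely what makes these properties transparent, since the general case would leave a $\nabla(rv)$ contribution in \eqref{rhsgen} and would require working with a stronger topology on the range of $\mathcal{S}$.
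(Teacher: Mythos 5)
Your argument is correct and is essentially the same proof the paper invokes by reference to the standard LSM analysis in \cite{bookCCH2016}: factorize $\mathcal{Z}=\mathcal{G}\circ\mathcal{S}$, combine Lemma~\ref{Stheorem} and Theorem~\ref{Gtheorem} (density of the range of $\mathcal{S}$ in $V_g(D)$ for $\zs\in D$, a weak-limit contradiction for $\zs\notin D$), and use the reciprocity relation of Proposition~\ref{Intpresentation} to pass from injectivity of $\mathcal{Z}$ to denseness of its range. Your filled-in details (symmetry of the kernel on $\Gamma_s$ giving $\mathcal{Z}^*h=\overline{\mathcal{Z}\bar h}$, weak closedness of $V_g(D)$, boundedness hence weak-to-weak continuity of $\mathcal{G}$ under $\mu=\mu_0$) are exactly the standard ingredients the paper leaves implicit.
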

The proof of this theorem follows exactly the same lines as in \cite[Chapter 2]{bookCCH2016}. We just emphasize that the  denseness of the range of $\mathcal{Z}$ is equivalent to its injectivity thanks to the reciprocity relation in Proposition \ref{Intpresentation}.

This theorem suggests to construct nearby solutions $\mathcal{Z} g^\epsilon_\zs \simeq \phi_\zs$ and construct an indicator for $D$ based on some norm related to $g^\epsilon_\zs$. In the literature, numerical experiments proved that an appropriate implementation would consist in applying a Tikhonov regularization by solving
$$
\epsilon \,  g^\epsilon_\zs +  \mathcal{Z}^*\mathcal{Z} g^\epsilon_\zs = \mathcal{Z}^* \phi_\zs.
$$
This choice is motivated by the denseness of the range of $\mathcal{Z}$ that ensures the construction of a nearby solution. The numerical details relate to this procedure is explained in the following section as well as the adaptations made for realistic configurations.

Consider a finite number of sources $\x_i$, $i=1,\ldots N$, equidistantly spaced on $\Gamma_s$. We use the finite element package FreeFem++\cite{ffpp} to generate the incident fields and scattered fields associated with these point sources and a given deposit characterized by its shape and conductivity. The problem is discretized using P1-Lagrange elements (We also use the option adaptmesh in FreeFem++ to increase the accuracy of the computations). We then numerically evaluate the data matrix 
$$
\mathcal{Z}_{i,j} := u^s(\x_i, \x_j).
$$
This data is then corrupted with a random noise of level $\delta$ as $\mathcal{Z}_{i,j} = (1 + \delta_{i,j}) \mathcal{Z}_{i,j}$ where $\delta_{i,j}$ are complex numbers with real and imaginary parts randomly and uniformly chosen in the interval $[ -\delta, \delta]$. 
The inversion algorithm takes this data together with the level noise $\delta$ as entries, then builds an indicator function for the deposit as follows: 

\begin{itemize}
 \item Consider a uniform sampling of the probed region (outside the tube), which is a rectangle that contains the deposit and with a  height at least equal to  the distance between the farthest point sources. Consider also a sampling point $\zs_\ell$ in this region. We evaluate the
the right hand side $\phi_\ell \in \C^N$ as
$$
\phi_\ell(i) := u^0(\x_i, \zs_\ell),
$$
\item We compute the function $g_\ell \in \C^N$ solution of the regularized equation, namely
 \begin{equation}\label{reguSystem}
\epsilon_\ell \,  g_\ell +  \mathcal{Z}^*\mathcal{Z} g_\ell = \mathcal{Z}^* \phi_\ell.
\end{equation}
The regularization parameter $\epsilon_\ell$ is evaluated using the Morozov's principle, i.e. it is selected so that it ensures the equality
$$
\| \mathcal{Z} g_\ell - \phi_\ell\| = \delta \| g_\ell\|.
$$
\end{itemize}

We finally plot the indicator function $\ell \mapsto 1/ \| g_\ell\|$. This procedure is the usually adopted method to build the criterion for sampling methods \cite{colton1997simple}.

\medskip

\noindent {\bf Discussion of the case of limited number of receivers}.

\medskip

The previous procedure requires to have $N$ receivers (coils) which may be for practical applications  hard to set up if $N$ is large. In general only a limited number of coils $M \ll N$ are available. For the industrial experiment mentioned earlier, these $M$-probes move together along the tube axis. Consequently, one has access only to $2M-1$ sized band-diagonal of the matrix $\mathcal{Z}$. With regards to the spreading of the solution around the source location (Figure \ref{spreadsol}), we see that $u^s(\x_i, \x_j) \simeq 0 $ if the distance between the points  $\x_i$ and $\x_j$ is sufficiently large. Therefore, approximating $\mathcal{Z}$ using the $2M-1$ diagonals can be reasonable if $M$ is sufficiently large. In practice $M$ can be reduced to $M=1$ (back-scattering data) or $M=2$ (using 2 coils and symmetries). This is why we also experiment in the following  small values of $M$, where surprisingly good results are obtained.

Our inversion algorithm for the case of data provided with $M$ moving coils ($M<N$) is the same as for having $N$ coils replacing the full matrix $\mathcal{Z}$ by the matrix $\mathcal{Z}^M$ that is obtained from $\mathcal{Z}$ by putting $0$ outside the diagonal central band of size $M$. More specifically, the matrix $\mathcal{Z}^M$ is defined by
$$
\mathcal{Z}^M_{i,j} = \mathcal{Z}_{i,j} \mbox{ if } |i-j| \le M \mbox{ and } \mathcal{Z}^M_{i,j} =0 \mbox{ if not.}
$$ 
Figure \ref{full-prob} gives an illustration of $\mathcal{Z}^M$ for $M=4$ and $N=32$.
 \begin{figure}[htbp]
\centering
\includegraphics[width=6cm]{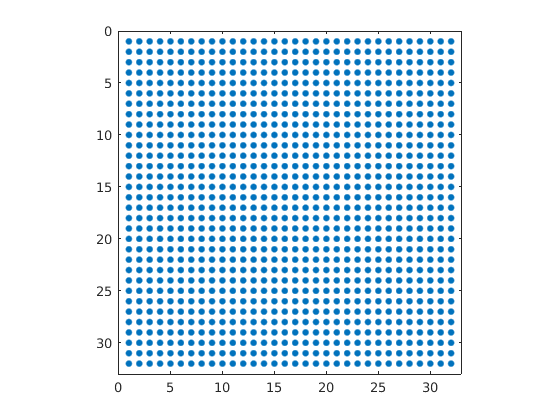}
\includegraphics[width=6cm]{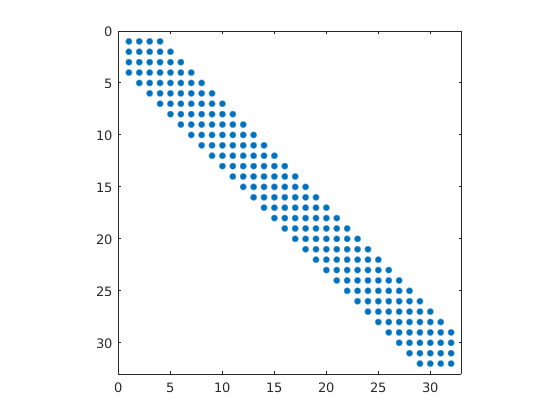}
\caption{Profile of the non zero entries of the matrix $\mathcal{Z}^M$ (right) with $M=4$ compared to the matrix $\mathcal{Z}$ (left).}
\label{full-prob}
\end{figure}

\section{Numerical experiments and validation}

\subsection{Description of the targeted application}
The numerical experiments conducted in this section are motivated by the industrial application of identifying the shape of magnetite deposits on the external surface of tubes inside steam generators from measurements of eddy currents associated with a co-axial coil inserted inside the tube. We refer to \cite{jiang2016identification,haddar2017robust} for a  description of the industrial context and the experimental setting.  Figure~\ref{fig-geo} provides a sketch illustrating the experiment. 
The conductive parts are formed by the tube and the deposits. 

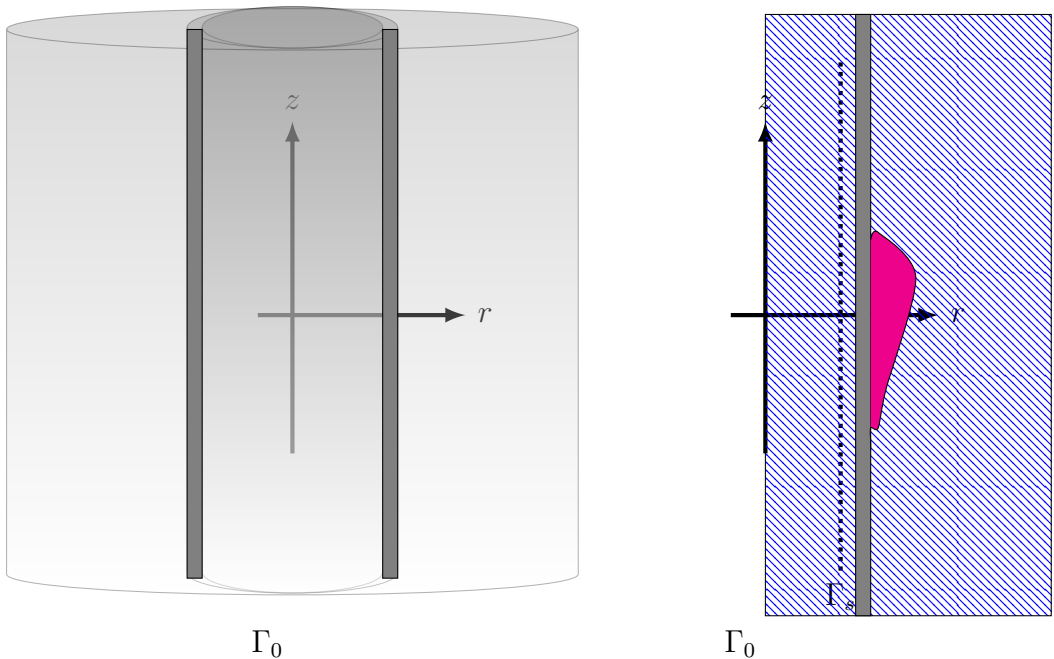
\begin{figure}[htbp]\centering
\begin{minipage}[t]{0.4\linewidth}
\begin{tikzpicture}
\begin{scope}[scale=0.46]
\draw[-latex,ultra thick] (-1,0)--(5,0) node[right]{$r$};
\draw[-latex,ultra thick] (0,-4)--(0,5.6) node[above]{$z$};
\node [draw, cylinder, shape aspect=2, rotate=90, minimum height=7.8cm, minimum width=2.4cm,shading=axis,opacity=0.34] (c1) {};
\node [draw, cylinder, shape aspect=2, rotate=90, minimum height=7.8cm, minimum width=2.8cm,shading=axis,opacity=0.32] (c2) {};
\node [draw, cylinder, shape aspect=2, rotate=90, minimum height=7.8cm, minimum width=7.6cm,shading=axis,opacity=0.3] (c2) {};
\end{scope}
\draw [fill=gray] (1.2,-3.5) rectangle (1.4,3.8);
\draw [fill=gray] (-1.2,-3.5) rectangle (-1.4,3.8);
\node[above right=10pt of {(-1,-5)}, outer sep=2pt,fill=white] {$\Gamma_{0}$};
\end{tikzpicture}\end{minipage} \hfill
\begin{minipage}[t]{0.4\linewidth}
\begin{tikzpicture}
\begin{scope}[scale=0.46]
\draw[-latex,ultra thick] (-1,0)--(5,0) node[right]{$r$};
\draw[-latex,ultra thick] (0,-4)--(0,5.6) node[above]{$z$};
\end{scope}
\draw[dotted,ultra thick] (1,-3.4)  node[below]{$\Gamma_{s}$}--(1,3.4);
\draw[pattern=north west lines, pattern color=blue] (0,-4) rectangle (3.8,4);
\draw [fill=gray] (1.2,-4) rectangle (1.4,4);
\node[above right=10pt of {(-1,-5)}, outer sep=2pt,fill=white] {$\Gamma_{0}$};
\begin{scope}
\draw[fill=magenta] plot [smooth] coordinates {(1.4,1) (1.5,1.1) (2,0.5) (1.6,-1) (1.5,-1.5) (1.4,-1.5) (1.4,-1.7) (1.4,-1.8)};
\end{scope}
\end{tikzpicture}\end{minipage}
\caption{Geometry configuration of the tube and its surrounding. Axisymmetric cut for the geometry shows: A deposit free configuration (left) and a deposit at the external part of the tube (right) with the probe positions on $\Gamma_s$.}
\label{fig-geo}
\end{figure}

 The setting for the experiment is inspired from \cite{jiang2016identification} which yields the spacing and dimensions of the coils as indicated in Figure \ref{fig-param}.

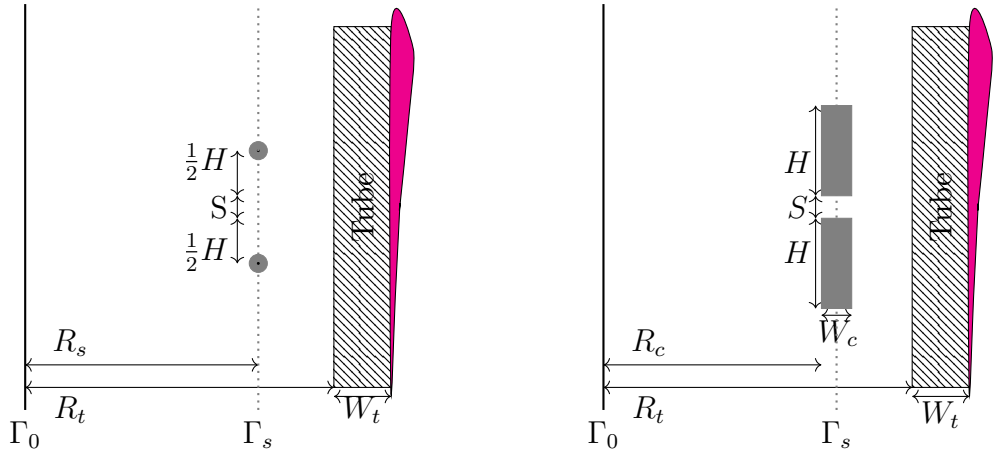
\begin{figure}[htbp]
\centering
\begin{minipage}[t]{0.49\linewidth}
\begin{tikzpicture}[scale=.6] 
\draw [<->] (7.7,-0.25)--(7.7,-1.25); \node (H) at (7.,1) {$\frac 1 2 H$};
\draw [<->] (7.7,-0.25)--(7.7, 0.25); \node (W) at (7.3,0) {S};
\draw [<->] (7.7, 1.25)--(7.7, 0.25); \node (H) at (7.,-1) {$\frac 1 2 H$};
\draw [<->] (9.84,-4.2)--( 11.11,-4.2); \node (W) at (10.47,-4.5) {$W_t$};
\node[rotate=90] (Tube) at (10.47,0) {Tube};
\draw [<->] (3,-3.5)--(8.165 ,-3.5); \node (R1) at (4,-3) {$R_s$};
\draw [<->] (3,-4)--(9.84 ,-4); \node (R2) at (4,-4.5) {$R_t$};
\filldraw[gray]  (8.165,-1.25) circle (0.2cm);
\filldraw[black]  (8.165,-1.25) circle (0.02cm);
\filldraw[gray]  (8.165,1.25) circle (0.2cm);
\filldraw[black]  (8.165,1.25) circle (0.02cm);
\filldraw[pattern=north west lines]   (9.84,-4) rectangle (11.11,4);
\draw[dotted,gray,thick] (8.165,-4.5)  node[below,black]{$\Gamma_{s}$}--(8.165,4.5);
\draw[thick] (3,-4.5)  node[below]{$\Gamma_{0}$}--(3,4.5);
\draw[fill=magenta] plot [smooth cycle] coordinates {(11.11,4) (11.6,3.5) (11.5,2) (11.3,0)
(11.3,0) (11.2,-2) (11.1,-4)};
\end{tikzpicture}
\end{minipage}
\begin{minipage}[t]{0.49\linewidth}
\begin{tikzpicture}[scale=.6] 
\draw [<->] (7.7,-2.25)--(7.7,-0.25); \node (H) at (7.3,1) {$   H$};
\draw [<->] (7.7,-0.25)--(7.7, 0.25); \node (W) at (7.3,0) {$S$};
\draw [<->] (7.7, 0.25)--(7.7, 2.25); \node (H) at (7.3,-1) {$H$};
\draw [<->] (7.83,-2.4)--( 8.5,-2.4); \node (W) at (8.2,-2.8) {$W_c$};
\draw [<->] (9.84,-4.2)--( 11.11,-4.2); \node (W) at (10.47,-4.6) {$W_t$};
\node[rotate=90] (Tube) at (10.47,0) {Tube};
\draw [<->] (3,-3.5)--(7.83 ,-3.5); \node (R1) at (4,-3) {$R_c$};
\draw [<->] (3,-4)--(9.84 ,-4); \node (R2) at (4,-4.5) {$R_t$};
\filldraw[gray]  (7.83,-2.25) rectangle (8.5,-0.25);
\filldraw[gray]  (7.83, 2.25) rectangle (8.5, 0.25);
\filldraw[pattern=north west lines]   (9.84,-4) rectangle (11.11,4);
\draw[dotted,gray,thick] (8.165,-4.5)  node[below,black]{$\Gamma_{s}$}--(8.165,4.5);
\draw[thick] (3,-4.5)  node[below]{$\Gamma_{0}$}--(3,4.5);
\draw[fill=magenta] plot [smooth cycle] coordinates {(11.11,4) (11.6,3.5) (11.5,2) (11.3,0)
(11.3,0) (11.2,-2) (11.1,-4)};
\end{tikzpicture}
\end{minipage}
\caption{Sketch of the probing configuration and parameters for the sources. Right: height of the coil $H=2$mm, width of the coil $W_c=0.67$mm, separation between coils $S=0.5$mm, radius of the internal boundary of the coil $R_c=7.83$mm, internal radius of the tube $R_t=9.84$mm.  
Left: corresponding parametrisation for idealistic case of point sources. The sources  are located at $\Gamma_s$ with radius $R_s=8.165$mm which is equal to the mid radius of the coils. Successive point sources are separated by $H+S=2.5$mm.}
\label{fig-param}
\end{figure}

Figure \ref{spreadsol} illustrates an incident field $u^0$ and a scattered field $u^s$ in the case of point sources and the case of rectangular coil described in Figure \ref{fig-param}. The scattered field correspond with two small deposits indicated by a solid line. We observe in particular the spreading of the incident field is very limited and most of the energy is captured by the two or three neighboring coils. 

\begin{figure}[htbp]\centering
\begin{tabular}{cccc}
\includegraphics[width=3.65cm, height=9cm]{./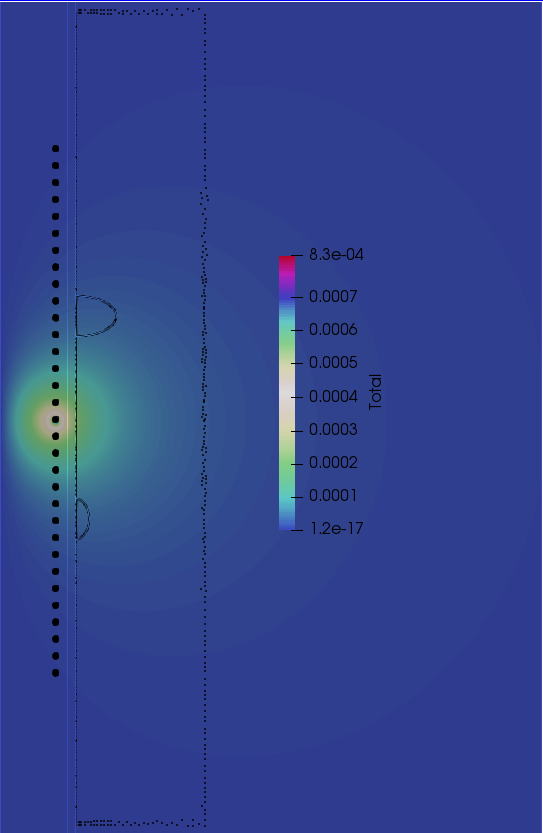}&
\includegraphics[width=3.65cm, height=9cm]{./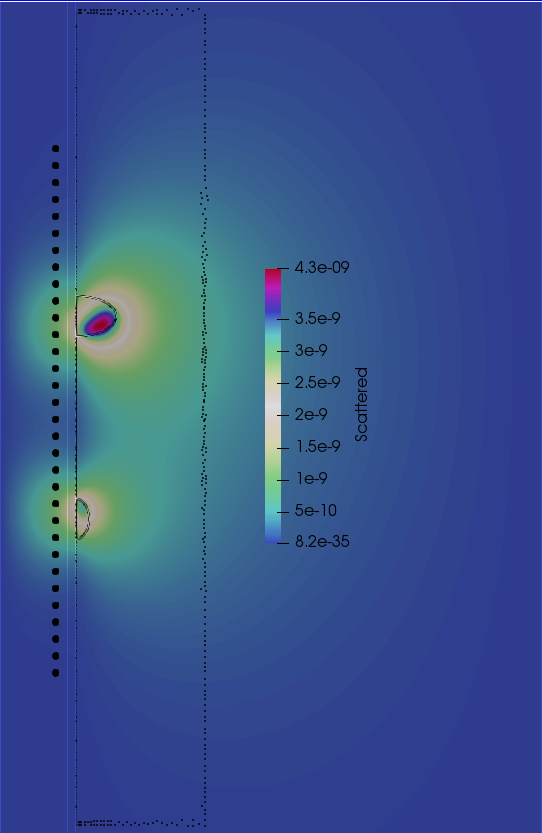}&
\includegraphics[width=3.65cm, height=9cm]{./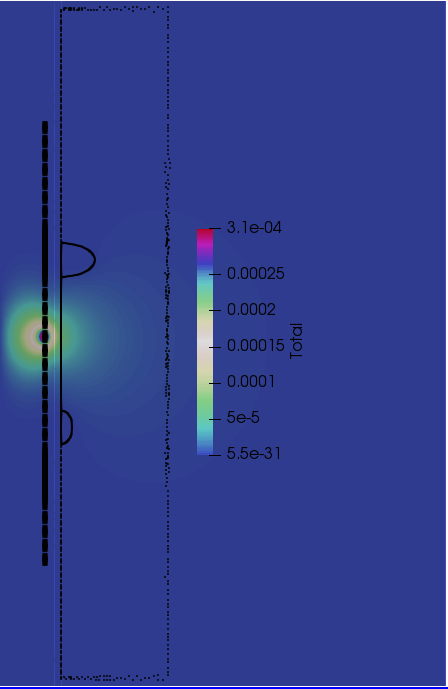}&
\includegraphics[width=3.65cm, height=9cm]{./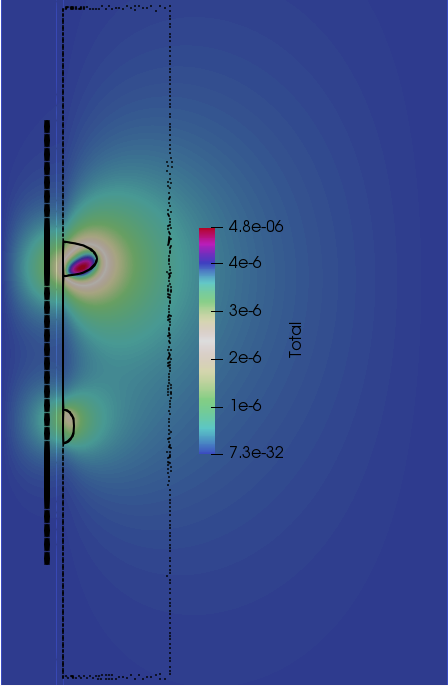}
\\ (a) & (b) & (c) & (d)
\end{tabular}
\caption{Eddy-current finite element solution of the total field with frequency used $\omega=200\pi$, in presence of two conductive deposits with semi-disc shape distanced by about $24.5$mm. From left to right: (a) total field using Green's function, (b) scattered field using point sources, (c) total field spread using probing coils and (d) scattered field using probing coils.}
\label{spreadsol}
\end{figure}

In the case of realistic experiments, only a finite number of coils are used. Moreover, the field generated by the coil is slightly different from the one generated by a point source. With this respect, we shall discuss first a validation of the model problem treated in the theoretical part when the point source approximation holds true. We then discuss how we can extend the algorithm to cases where the point source approximation does not hold.

\subsection{Discussion of the inversion results in the case of point sources}
 
We hereafter present some numerical experiments in the case of point sources. We use physical parameters compatible with the realistic configurations of the tube and deposits (see Figure \ref{fig-param} for geometric parameters details and Table~\ref{PhysicalProperty} for physical details of all materials). We consider only the case of simply connected deposit while other configurations can be seen below for the case of coils.

\subsubsection*{Example of reconstructions obtained with $\mathcal{Z}$ for different values of $N$.}

In this series of experiments we vary the number of sources (keeping the same vertical spacing) and use the full matrix to image the domain containing the deposit. It is observed in Figure \ref{dtype_1_PSRC} that a good accuracy is obtained with sufficiently large number of sources that covers an aperture larger than the deposit elongation. When the number of coils becomes small  (4 on the left), the results indicates that a good localization maybe achieved if the sources are facing the deposit.  This motivates the use of small number of coils sliding along the $z$ direction and covering a large aperture as discussed in the following experiment.

\begin{figure}[htbp]
\includegraphics[width=3cm, height=9cm]{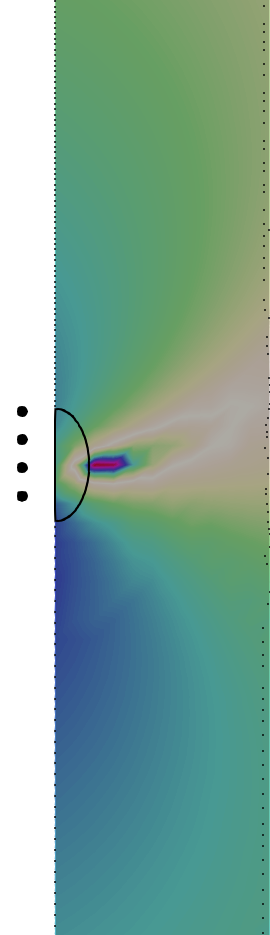}\hfill
\includegraphics[width=3cm, height=9cm]{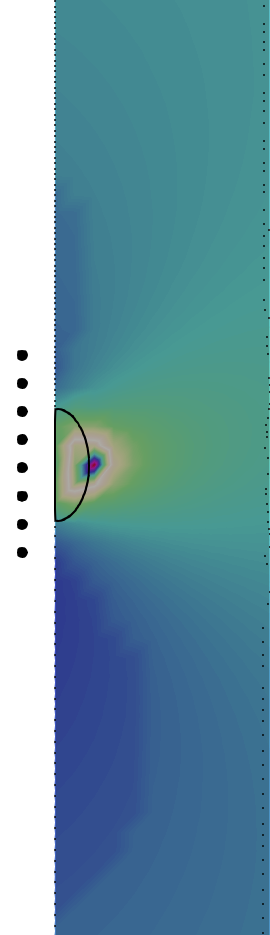}\hfill
\includegraphics[width=3cm, height=9cm]{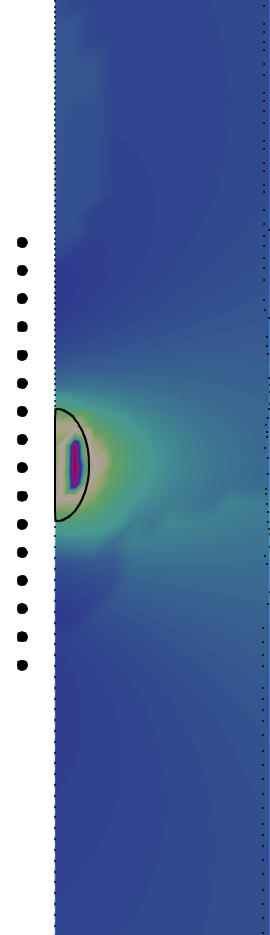}
\caption{Reconstructions obtained for different number of point sources $N$. From left to right, $N=4, \, 8, \, 16$. The location of the point sources is indicated with black small circles and the exact shape of the deposit is indicated  using a solid line and it represents a semi-disc with an elliptical shape of radius 3mm in the $y$-direction and 5mm in the $z$-direction.}
\label{dtype_1_PSRC}
\end{figure}

\subsubsection*{Examples of reconstructions obtained with $\mathcal{Z}^M$ for different values of $M$.}

We here consider the same configuration as previously, fix $N=2^5$ (this refers here to the number of positions that one point source may take) and vary the value of $M$ which indicates the number of point sources that are sliding along the $z$-axis. The obtained results are illustrated by Figure~\ref{dtype_1_PSRC_prob}. We observe in particular that even with back-scattering data, a good localization along the z-axis is obtained. Increasing the number of point sources improves the accuracy along the axis orthogonal to the source location. 
\begin{figure}[htbp]
\includegraphics[width=3cm, height=9cm]{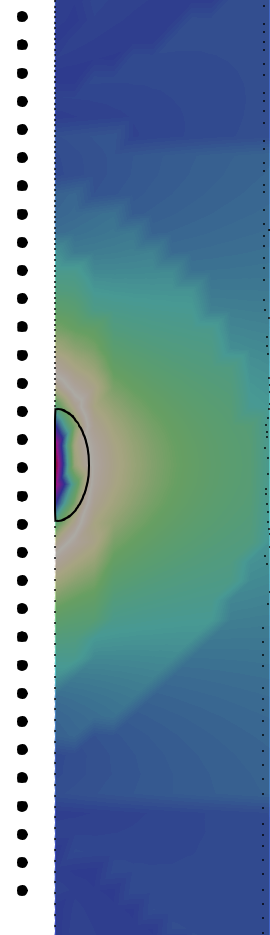}\hfill
\includegraphics[width=3cm, height=9cm]{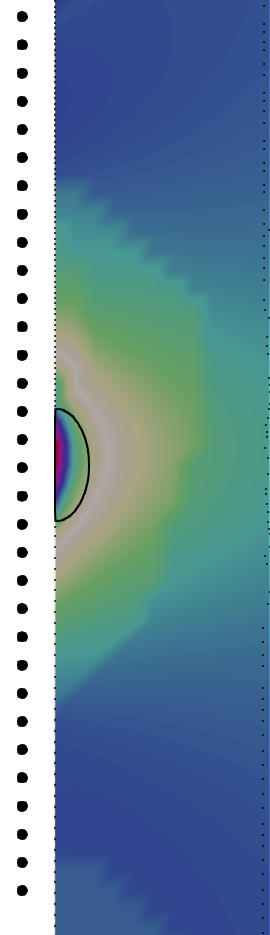}\hfill
\includegraphics[width=3cm, height=9cm]{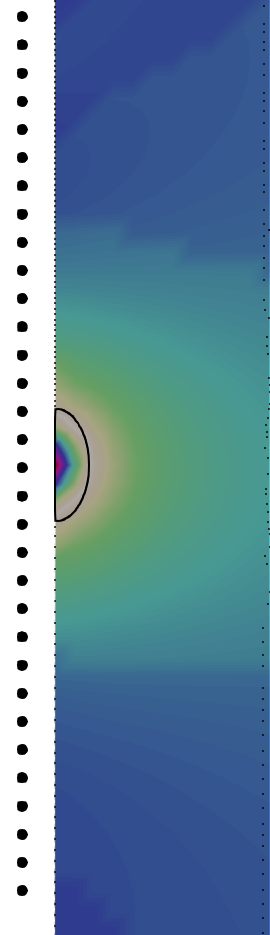}
\caption{Reconstructions obtained with $\mathcal{Z}^M$ for different number of point sources $M$ sliding along the $z-axis$. From left to right, $M=1, \, 2, \, 8$. The location that can be taken by a  point source is indicated with black discs ($N=2^5$ and the exact shape of the deposit is indicated with a solid line and it represents a semi-disc with an elliptical shape of radius 3mm in the $y$-direction and 5mm in the $z$-direction.}\label{dtype_1_PSRC_prob}
\end{figure}

\subsection{Extension to the case of realistic coils}
In real experiments the coils are better represented as volumetric sources $J$ with constant intensity $I$ in the region of the coil and vanishing outside. The coil region is a rectangle characterized  by a center $\x_j \in \Gamma_s$ and  dimensions $H$ and $W_c$ as indicated in Figure~\ref{fig-param}. Let us denote by $u^0_j$ and $u_j$ the incident field and respectively the total field associated with a source $J$ at coil position $\x_j \in \Gamma_s$. These fields are solutions to \eqref{axissymmetric-eddycurrent-equation} with $\sigma^{0}$ and $\sigma$ respectively i.e. in the absence and in the presence of a deposit. The field created by a coil at position $z_j$ and recorded by a coil at position $z_i$  is then given by \cite{auld1999review,Blitz1997} 
\begin{equation}
 \mathcal{Z}_{i,j} =\dfrac{i\omega}{r_{0}}\int_{D} (\sigma-\sigma^{0}) u_i(\x) {u^{0}_j(\x)} r \d\x.
\label{zM}
\end{equation}
This impedance has a similar structure as \eqref{us_formula} and corresponds with averaging the scattering field over the recording coil region. Using the reciprocity relation we now design the right hand side of the sampling equation as
$$
\phi_\ell(i) := u^0_i(\zs_\ell).
$$
The inversion algorithm is then performed the same way as in the case of point sources. In the case of a limited number of coils $M$, the inversion is adapted as above by replacing $\mathcal{Z}$ with $\mathcal{Z}^M$ defined by \eqref{zM}.

\subsubsection*{Numerical examples.}

For the numerical examples below, we use realistic physical parameters related to SAX probe system, namely a coil of dimension 0.67mm$\times$2mm (width$\times$height) and a low frequency about $200\pi$ Hertz. The electric conductivity and the magnetic permeability of different compartments involved in our numerical simulation are reported in table~\ref{PhysicalProperty}.

\begin{table}[ht]
\begin{center}
 \begin{tabular}{ccc}
    \hline
    \multicolumn{2}{c}{Electric conductivity in Siemens per meter (S/m)}&\\
    \cline{1-2}
    \hbox{Vacuum}&0.0\\
    \hbox{Tube}&0.97E03\\
    \hbox{Deposit}&1.75E03\\
    \hline
\end{tabular}\vspace{0.1in}\\
\begin{tabular}{ccc}
    \hline
    \multicolumn{2}{c}{Magnetic permeability in Henry per meter (H/m)}&\\
    \cline{1-2}
    \hbox{Vacuum}&4.0E-07$\pi$\\
    \hbox{Tube}&4.04E-07$\pi$\\
    \hbox{Deposit}&4.04E-07$\pi$\\
    \hline
\end{tabular}
\end{center}
\caption{Physical Electrical and Magnetic properties of the Materials used in the numerical Simulations.}
\label{PhysicalProperty}
\end{table}

\subsubsection*{Example of reconstructions obtained with $\mathcal{Z}$ for different values of $N$.}
As observed earlier for point sources, the increase in coil's number enhances qualitatively the inversion that detects both the position and the bulk of the deposit. This is illustrated with the numerical results reported in Figure \ref{dtype_1_COIL}. Indeed, with the increase of point sources from $2^2$, $2^3$ to $2^5$ points, our algorithm can locate the deposit and also gave fairly precise qualitative information about its shape taken here as a semi-disk. One here wants to add as many points sources as possible to enhance the inversion and get a clearer image of the deposition, although, this goes with the expenses of the computational cost mainly related to the increase in the size of the full linear system \eqref{reguSystem}. It is worth noticing, although, that there is, admittedly, a limitation on the reconstruction of the deposit while increasing the number of point sources. The spread of the field generated by faraway sources could barely be detected by other faraway sources. The spread is indeed a function of the distance, the electric conductivity, and the magnetic permeability of the material where the wave travels through. A practical consideration suggests that the zone covered by the set of point sources should cover enough space susceptible to contain the deposit. For this reason, we shall consider, in our numerical experiments $2^5$ point sources and proceed with $\mathcal{Z}^M$, a sparse version of the main matrix as explained above.

\begin{figure}[htbp]
\includegraphics[width=3cm, height=9cm]{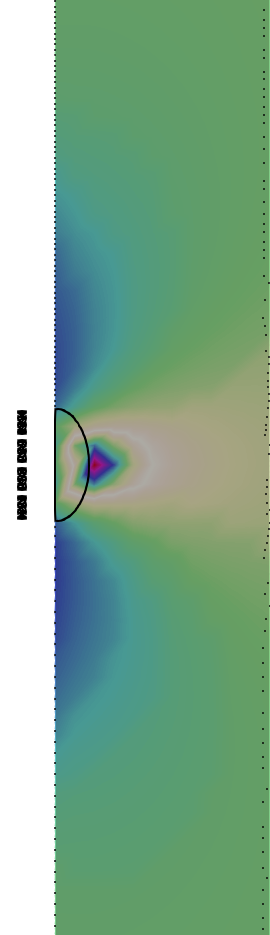}\hfill
\includegraphics[width=3cm, height=9cm]{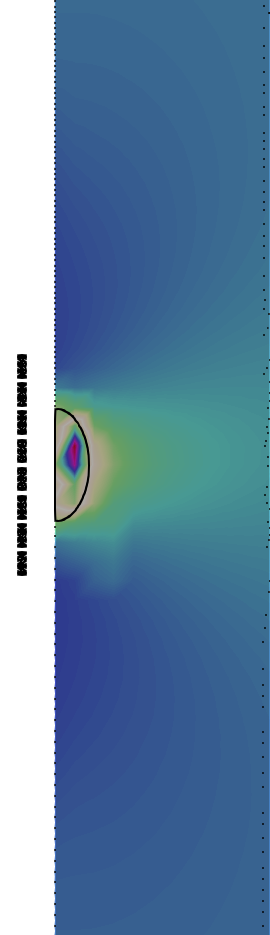}\hfill
\includegraphics[width=3cm, height=9cm]{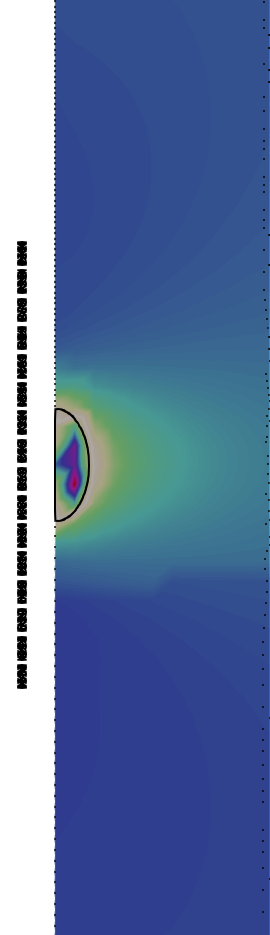}
\caption{Reconstructions obtained for different number of coils $N$. From left to right, $N=4, \, 8, \, 16$. The location of the coils is indicated with black small circles and the exact shape of the deposit  is indicated  using a solid line.}
\label{dtype_1_COIL}
\end{figure}

\subsubsection*{Examples of reconstructions obtained with $\mathcal{Z}^M$ for different values of $M$.}

Figure \ref{dtype_1_PSRC_prob} presents the reconstruction of the deposit using the sparse matrix $\mathcal{Z}^M$ where the total scan is done with $2^5$ point sources. In these results, we vary the number of point sources sliding along the z-direction. It is shown that this procedure, even if it dismisses some measurements (from faraway point sources) the inversion is still capable of retrieving qualitative results comparing to the use of the full matrix $\mathcal{Z}$, which plot is depicted in Figure \ref{dtype_1_PSRC}. These promising results led us to consider the realistic case where point sources become coils-probes. 

\begin{figure}[htbp]
\includegraphics[width=3cm, height=9cm]{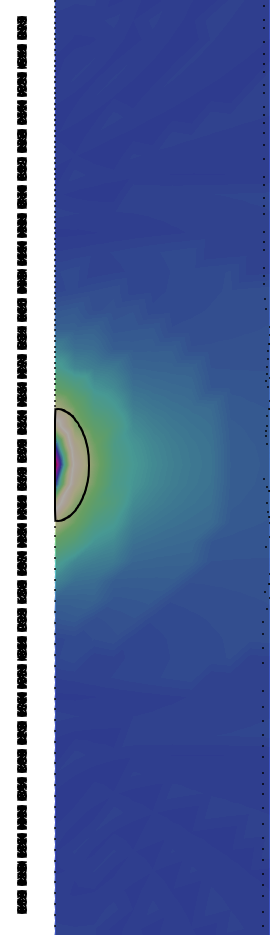}
\hfill
\includegraphics[width=3cm, height=9cm]{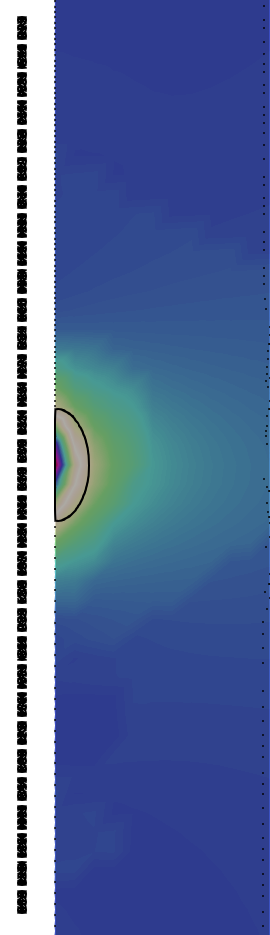}
\hfill
\includegraphics[width=3cm, height=9cm]{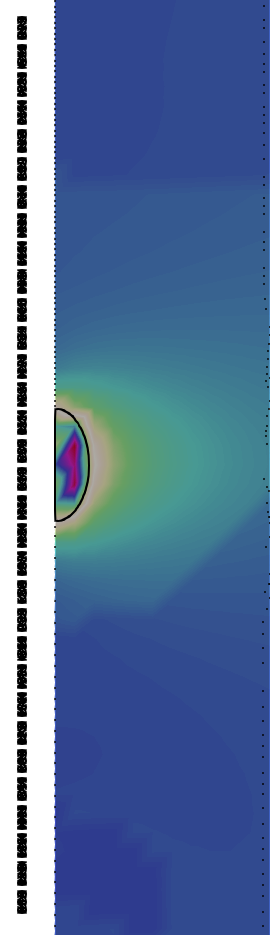}
\caption{Reconstructions obtained with $\mathcal{Z}^M$ for different number of coils  $M$ sliding along the $z-axis$. From left to right, $M=1, \, 2, \, 8$. The location that can be taken by a  coil is indicated with black small circles($N=2^5$ and the exact shape of the deposit is indicated with a solid line and it represents a semi-disc with an elliptical shape of radius 3mm in the $y$-direction and 5mm in the $z$-direction and it represents a semi-disc with an elliptical shape of radius 3mm in the $y$-direction and 5mm in the $z$-direction.}\label{dtype_1_COIL_prob}
\end{figure}

\begin{figure}[htbp]
\includegraphics[width=3cm, height=9cm]{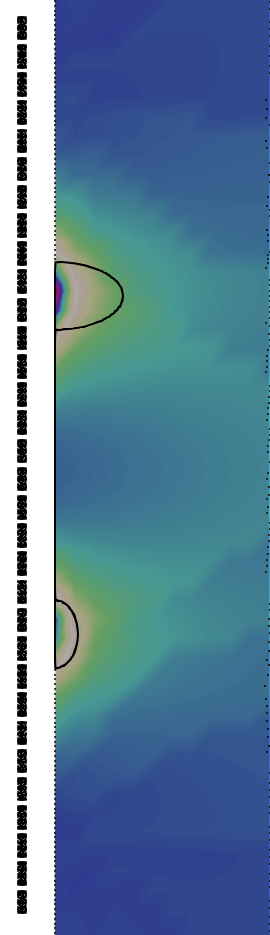}\hfill
\includegraphics[width=3cm, height=9cm]{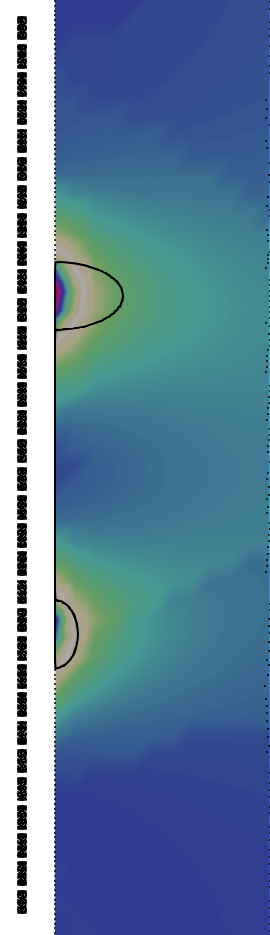}\hfill
\includegraphics[width=3cm, height=9cm]{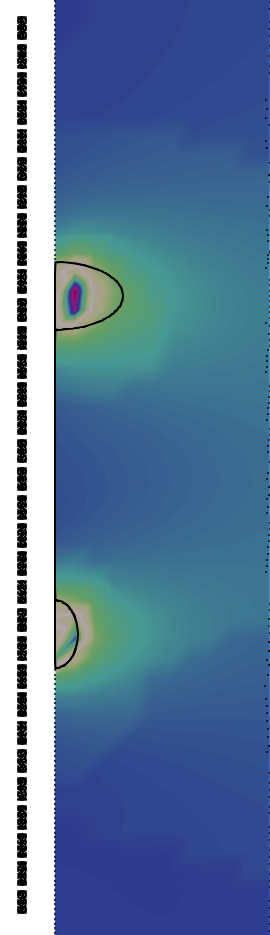}
\caption{Reconstructions obtained with $\mathcal{Z}^M$ for different number of coils  $M$ sliding along the $z-axis$. From left to right, $M=1, \, 2, \, 8$. The location that can be taken by a  coil is indicated with black small circles($N=2^5$ and the exact shape of the deposit is indicated with a solid line and it represents a semi-disc with an elliptical shape of radius 3mm in the $y$-direction and 5mm in the $z$-direction.}\label{dtype_21_COIL_prob}
\end{figure}

Figures \ref{dtype_1_COIL} and \ref{dtype_1_COIL_prob} reproduce similar experiments to those presented in Figures \ref{dtype_1_PSRC} and \ref{dtype_1_PSRC_prob} respectively. The outcome of the reconstruction has the same trends i.e. i) augmentation of the number of probes enhances the quality of the reconstruction and ii) the reconstruction with the limited data keeps producing good images of the deposit. Surprisingly,  the case of back-scattering has produced a clear-cut image of the deposit comparable to the one obtained with the full matrix $\mathcal{Z}$. These promising results have challenged us to consider more complicated situations, such that including several deposits with shapes imitating the drop of water (modeling clogs).

\begin{figure}[htbp]
\includegraphics[width=3cm, height=9cm]{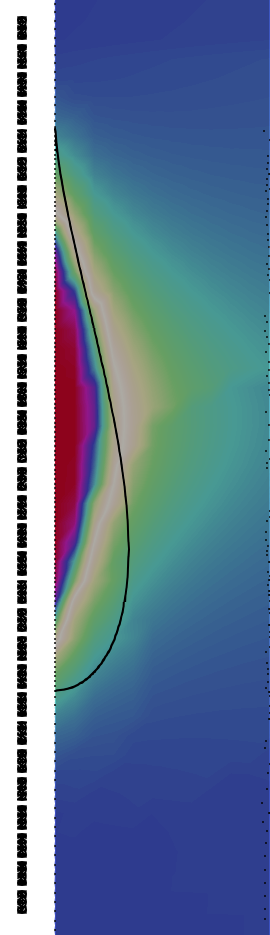}\hfill
\includegraphics[width=3cm, height=9cm]{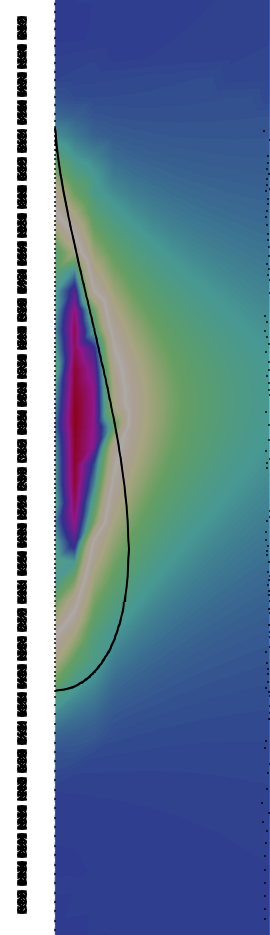}\hfill
\includegraphics[width=3cm, height=9cm]{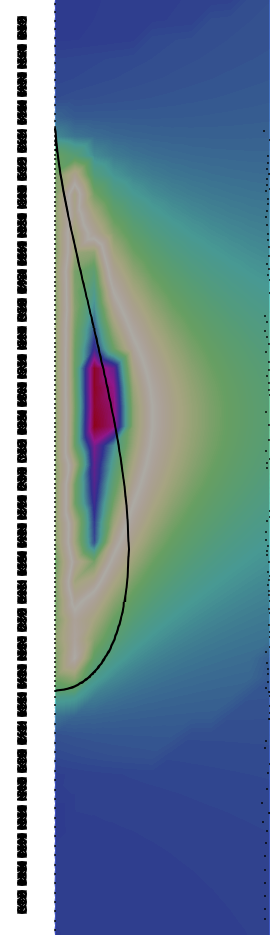}
\caption{Reconstructions obtained with $\mathcal{Z}^M$ for different number of coils  $M$ sliding along the $z-axis$. From left to right, $M=1, \, 2, \, 8$. The location that can be taken by a  coil is indicated with black small rectangle($N=2^5$ and the exact shape of the deposit is indicated with a solid line. The considered shape is about $50$mm long and $8$mm width.}\label{dtype_31_COIL_prob}
\end{figure}

More precisely, Figure \ref{dtype_21_COIL_prob} considers two distanced deposits with different shapes. The inversion algorithm shows that it can detect these two obstacles and performs better as the number of probes sliding along the z-direction gets increased. Figure \ref{dtype_31_COIL_prob} illustrates the same outcome and performance of the inversion algorithm. In this case, we consider a deposit with a shape that models a drop (relatively long, i.e., approximately as long as 12 coils). It is shown that the inversion algorithm can detect the bulk of the deposit and also approaches its shape.

\section*{Conclusion}
We presented in this work an eddy-current inverse shape problem and employed the so-called Linear Sampling Method to propose an imaging algorithm. We have tested and thoroughly analyzed the method both in theoretical and industrial settings, respectively. We numerically showed in particular that even for (nearly) back-scattering data, a commonly encountered configuration in practice, an adaptation of the algorithm leads to satisfying results. The latter can serve to provide fast qualitative inspection of a large number of tubes. The outcome can also be used as initial guess for more computationally involved  inversion methods based on optimization techniques. Exploring this coupling for 3D configurations is one of the perspectives of this work. We are also interested in studying the performance this procedure for identifying defects with different types, such as cracks hidden by deposits, which constitutes one of the main current challenges for the considered industrial application.

\section*{References}
\bibliographystyle{plain}
\bibliography{biblio.bib}
\end{document}